\newtheorem{definition}{Definition}[section]
\newtheorem{proposition}{Proposition}[section]
\newtheorem{corollary}{Corollary}[section]
\newtheorem{remark}{Remark}[section]
\newtheorem{example}{Example}[section]
\def\R{\mathbb{R}\xspace}
\def\Z{\mathbb{Z}\xspace}
\title{Regions of the type C Catalan arrangement}
\author[$\dagger$]{Anne  Micheli}
\author[$\star$]{Vu Nguyen Dinh}
\affil[$\dagger$]{{\href{mailto:amicheli@irif.fr}{amicheli@irif.fr}
}, IRIF, CNRS and Universit\'e de Paris, Case 7014, 75205
  Paris Cedex 13 France}
\affil[$\star$]{{\href{mailto:ndvu@math.ac.vn}
    {ndvu@math.ac.vn}}, Institute of Mathematics, Vietnam
    Academy of Science and Technology, 18 Hoang Quoc Viet, 10307
    Hanoi, Vietnam}
\begin{document}

 \maketitle
%% put the date of submission here
\date{}

%% leave this blank until submitting a revised version
%\revised{}

%% put your English abstract here, or comment this out if you don't have one yet
%% please don't use custom commands in your abstract / resume, as these will be displayed online
%% likewise for citations -- please don't use \cite, and instead write
%% out your citation as something like (author year)
\begin{abstract}
  In this paper, we give a bijection between rooted labeled ordered
  forests with a selected subset of their leaves and the regions of
  the type $C$ Catalan arrangement in $\R^n$.  We thus obtain a
  bijective proof of the well-known enumeration formula of these
  regions ${2^n}n! \binom{2n}{n}$.
\end{abstract}

%\keywords{hyperplane arrangement, type $C$, forest, enumeration}

The type $C$ Catalan arrangement $\mathcal{C}_{\left\{-1,0,1
  \right\}}(n)$ in $\R^n$ is the set of the hyperplanes
 $H_{\mathcal{C}_{\left\{-1,0,1
  \right\}}(n)}=\{ x_i - x_j = s, x_i + x_j = s,  2x_i = s, \forall s \in
 \{-1,0,1\}, \forall 1 \leq i <j \leq n\}$.
The regions of $\mathcal{C}_{\left\{-1,0,1 \right\}}(n)$ are the
connected components of
$\R^n\setminus \displaystyle\cup_{H\in \mathcal{C}_{\left\{-1,0,1
    \right\}}(n)}H$.

% The number
% of regions of the Catalan arrangement associated to any root system is
% well known and equals the product of the order of the corresponding Weyl
% group with the number of antichains of the corresponding root
% order. In the type $C$ case, it equals ${2^n}n! \binom{2n}{n}$.

The number of regions of the Catalan arrangement of type $C$ is well
known and equals ${2^n}n! \binom{2n}{n}$. We can obtain it for
example from Zaslavsky's formula which says that it is enumerated by
$(-1)^n \chi_{H_{\mathcal{C}_{\left\{-1,0,1
  \right\}}(n)} }(-1)$ where $\chi_{H_{\mathcal{C}_{\left\{-1,0,1
  \right\}}(n)} } $ is the
characteristic polynomial of $H_{\mathcal{C}_{\left\{-1,0,1
  \right\}}(n)}$. It is then equals to 
$(-1)^n \chi_{H_{B(n)} }(-1-2n)$ where
$H_{B(n)}=\{ x_i - x_j = 0, x_i + x_j = 0, x_i = 0, \forall 1 \leq i <j \leq n\}$. Then by Theorem 5.5 in
\cite{A96}, the result is deduced.

In this paper, we exhibit a bijection between rooted labeled ordered forests with
a subset of their leaves and these regions and thus retrieve
bijectively their enumeration.

Type $A$ arrangements have been and are still vastly studied in
combinatorics, in particular the problem of bijectively enumerating
the regions of type $A$ arrangements.
The reader can find an introduction to hyperplane arrangement
and its connexions to combinatorics by R.P. Stanley \cite{S07}. The
equation of a hyperplane of  type $A$ arrangement is of the form
$x_i-x_j =s$ with $s$ in $\Z$ and $i, j$ in $\llbracket
1,n\rrbracket = \{1, 2,\ldots, n\}$.  The case of the braid ($s=0$)  
arrangement is easy to understand, the Shi ($s=0,1$) and Catalan ($s=-1,0,1$) cases have nice and
simple formulas which have been bijectively interpreted \cite{AL99,
  PS00, S07, CFV15, DG18}. The number of regions of the Linial ($s=1$)
arrangement was known but it is only recently that O. Bernardi gave  a
bijective interpretation \cite{B18}. His bijection extends to the
regions of many type $A$
arrangements \cite{B18}, including Catalan, Shi and semi-order type
$A$ arrangements.
Our bijections between orders, families of forests and regions of the
type $C$ Catalan arrangement were inspired by the Bernardi bijections.

The results on type $C$ arrangements are less extensive.   In
1996, C.A. Athanasiadis computed the number of regions of the type $C$
Shi arrangement \cite{A96} . The obtained formula is very simple $(2n+1)^n$ and
K. M\'eszar\'os \cite{M13} in 2013 gave a bijective proof for
the number of regions of the type $C$ Shi arrangement,
which was a generalization of the bijection exhibited by
C.A. Athanasiadis and S. Linusson in the type $A$ case \cite{AL99}.
C.A. Athanasiadis also computed among others the number of regions of
the Linial arrangement of type $C$ \cite{A99}. No bijective proof of
this enumeration has yet emerged.

The paper is divided in three sections. In section \ref{sec:regions2orders}, we explain
how to go bijectively from regions of the type $C$ Catalan arrangement to
some orders. Then in section \ref{sec:orders2forests}, we exhibit a bijection between
these orders and rooted labeled ordered forests. Finally, we compute in
section \ref{sec:enumeration}, the number of regions of the type $C$ Catalan
arrangement.

\section{From regions to orders}
\label{sec:regions2orders}
In this section we show that each region $R$ of the type $C$ Catalan
arrangement corresponds bijectively to a specific order between the
variables $x_i$ and $1+x_i$ for any $i$ in $\llbracket -n,n\rrbracket \setminus \left\{0 \right\}$
where $(x_1,\ldots,x_n)$ denotes the coordinates of any point of $R$ and $x_{-i} = -x_i$ for all $i$ in $\llbracket 1,n\rrbracket$.

In the sequel, for any $i$ in $\llbracket -n,n\rrbracket \setminus
\left\{0 \right\}$, we denote by :\\
$\bullet\ \alpha_i^{(0)}$ the variable $x_i$, \qquad $\bullet\ \alpha_i^{(1)}$ the variable $1+x_i$.\\
These notations are derived from the paper of O. Bernardi \cite{B18}.
We also denote by ${\mathcal A}_{2n}$ the alphabet $\{\alpha_i^{(0)}, \alpha_i^{(1)},
\forall i\in \llbracket -n,n\rrbracket \setminus \left\{0 \right\} \}$.

We first define a symmetric annotated 1-sketch and explain its
symmetries. Then, in a second time, we will show that the regions of the
type $C$ Catalan arrangement are in one-to-one correspondence with
symmetric annotated 1-sketches.

\subsection{Symmetric annotated 1-sketch}

\begin{definition}
\label{def:symAnnotSk}
A {\it{symmetric annotated $1$-sketch of size 2n}} is a word $\omega = w_1 ...w_{4n}$ 
that satisfies for all $ i,j \in \llbracket -n,n\rrbracket \setminus \left\{0 \right\}$:
\begin{itemize}
\item[(i)] $ \left\{w_1,...,w_{2n},...,w_{4n} \right\} = {\mathcal A}_{2n},$
\item[(ii)] $ \alpha_{i}^{(0)}$ appears before $\alpha_{i}^{(1)}$,
\item[(iii)] If $ \alpha_{i}^{(0)}$ appears before $\alpha_{j}^{(0)}$ then $ \alpha_{i}^{(1)}$ appears before $\alpha_{j}^{(1)}$,
\item[(iv)] If $ \alpha_{i}^{(0)}$ appears before $\alpha_{j}^{(s)}$ then $ \alpha_{-j}^{(0)}$ appears before $\alpha_{-i}^{(s)}$,
$\forall s \in \{0,1\}$.
\end{itemize}

Let $D^{(1)} (2n)$ be the set of {\it{symmetric annotated $1$-sketches of size 2n}}.
\end{definition}

\begin{example} 
\label{omega}
$\omega = \alpha_{-2}^{(0)}
\alpha_{1}^{(0)}
\alpha_{-2}^{(1)}
\alpha_{3}^{(0)}
\alpha_{-3}^{(0)}
\alpha_{1}^{(1)}
\alpha_{-1}^{(0)}
\alpha_{3}^{(1)}
\alpha_{-3}^{(1)}
\alpha_{2}^{(0)}
\alpha_{-1}^{(1)}
\alpha_{2}^{(1)} \in D^{(1)} (6)$.
\end{example}

\begin{remark}
\label{rk:symAnnotSk} 
  \begin{enumerate}
  \item Condition $(ii)$ of Definition \ref{def:symAnnotSk} implies
    that a symmetric annotated 1-sketch starts with a sequence of
    $\alpha_i^{(0)}$ letters and ends with a sequence of $\alpha_i^{(1)}$
    letters.
  \item \label{rk:symAnnotSk2} Condition $(iv)$ of Definition \ref{def:symAnnotSk} implies
    that the subword of $\omega$ composed of the $\alpha_.^{(0)}$
    letters has the form $\alpha_{i_1}^{(0)}\ldots \alpha_{i_n}^{(0)}
    \alpha_{-i_n}^{(0)} \ldots \alpha_{-i_1}^{(0)}$ with
    $\{|i_1|,\ldots, |i_n|\}=\llbracket 1,n \rrbracket$. Moreover, the subword of
    $\omega$ composed of the $\alpha_.^{(1)}$ letters is exactly $\alpha_{i_1}^{(1)}\ldots \alpha_{i_n}^{(1)}
    \alpha_{-i_n}^{(1)} \ldots \alpha_{-i_1}^{(1)}$.
  \end{enumerate}
\end{remark}

Furthermore, a symmetric annotated 1-sketch is the result of a
specific shuffle between two words on the alphabet ${\mathcal A}_{2n}$ where
one is the symmetric of the other in the following sense :

\begin{definition}
 Let $\omega_1$ be a word on ${\mathcal A}_{2n}$
 that  ends with letter $u$, i.e $\omega_1 = \omega_0 u$. 
We define the symmetric of $\omega_1$ as a word $\overline{\omega}_1 =
\overline{u} \ \overline{\omega}_0$ where $\overline{u} =
\alpha_{-k}^{(1-s)}$ if $u =\alpha_{k}^{(s)}$, $s\in \{0,1\}$
% $$\overline{u} =
%     \begin{cases}
%     \alpha_{-k}^{(0)}     & \quad \text{if } {u =\alpha_{k}^{(1)}}\\
%  \alpha_{-k}^{(1)}  & \quad \text{if } {u =\alpha_{k}^{(0)}}\\
%     \end{cases} $$ 
and $\overline{\omega}_0$ is recursively defined in the same way.
\end{definition}

\begin{example}
\label{ex:symsk}
The symmetric of $\omega_1 =\alpha_{-2}^{(0)}
\alpha_{1}^{(0)}
\alpha_{-2}^{(1)}
\alpha_{3}^{(0)}
\alpha_{1}^{(1)}
\alpha_{3}^{(1)} $ is $\overline{\omega}_1 = \alpha_{-3}^{(0)}
\alpha_{-1}^{(0)}
\alpha_{-3}^{(1)}
\alpha_{2}^{(0)}
\alpha_{-1}^{(1)}
\alpha_{2}^{(1)}$.
\end{example}

Now, a symmetric annotated 1-sketch $\omega$ is the combination of two
symmetric words $\omega_1$ and $\omega_2 = \overline{\omega}_1$. As a matter of fact, 
we will now explain how we obtain $\omega_1$ and $\omega_2$ from $\omega$. 
We call words of the form $\omega_1$, annotated 1-sketches which formal
definition is:% the following :
\begin{definition}
\label{definition: annosketch}
  An annotated 1-sketch of size $n$ is defined by $2n$ letters
  $\alpha_{j_k}^{(0)}$ and $\alpha_{j_k}^{(1)}$, $k$ in $\llbracket
  1,n\rrbracket$ such that $\{| j_1|,\ldots ,| j_n|\}= \llbracket
  1,n\rrbracket$ and which satisfies conditions
  $(ii)$ and $(iii)$ of Definition \ref{def:symAnnotSk}. 

  We denote  by $A_{n,s}$,
  $n \leq s \leq 2n-1$, the set of annotated 1-sketches where the
  rightmost letter $\alpha_{.}^{(0)}$ 
  is at position $s$.%\leq 2n-1$.

% \quad (i) $\alpha_{j_k}^{(0)}$ appears before $\alpha_{j_{k+1}}^{(0)}$,

% \quad (ii) $\alpha_{j_k}^{(0)}$ appears before $\alpha_{j_{k+1}}^{(1)}$,

% \quad (iii) If $\alpha_{j_k}^{(0)}$ appears before $\alpha_{j_{k+1}}^{(0)}$ then $\alpha_{j_k}^{(1)}$ appears before $\alpha_{j_{k+1}}^{(1)}$.

%We denote the set of annotated 1-sketches where the letter $\alpha_{j_n}^{(0)}$ is at position s by $A_{n,s}$, $n \leq s \leq 2n-1$.
\end{definition}

% \begin{example} $\omega_1 =\alpha_{-2}^{(0)}
% \alpha_{1}^{(0)}
% \alpha_{-2}^{(1)}
% \alpha_{3}^{(0)}
% \alpha_{1}^{(1)}
% \alpha_{3}^{(1)} \in A_{3,4}$.
% \end{example}

Thus we get that :
\begin{proposition}
\label{prop:sketchdecomp}
  Any symmetric annotated 1-sketch $\omega$ is the composition of
  an annotated 1-sketch $\omega_1$ and its symmetric $\overline{\omega}_1$.
\end{proposition}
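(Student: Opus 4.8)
The plan is to exhibit the decomposition explicitly and then verify that the two pieces have the required form. Throughout, write $\omega = w_1\cdots w_{4n}$, and since by condition $(i)$ each letter of ${\mathcal A}_{2n}$ occurs exactly once, let $\prec$ denote the resulting total order ``appears before'' on the letters. Extend the letter map $\overline{\alpha_k^{(s)}} = \alpha_{-k}^{(1-s)}$, which is an involution, to words by reversing and barring, so that $\overline{w_1\cdots w_m} = \overline{w_m}\cdots\overline{w_1}$; this is exactly the symmetric $\overline{\cdot}$ of the preceding definition.

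The heart of the argument is the claim that $\omega$ is fixed by this operation, i.e. $w_p = \overline{w_{4n+1-p}}$ for every $p$, equivalently $u \prec v \iff \overline{v} \prec \overline{u}$ for all letters $u \neq v$. I would prove this by splitting on the superscripts of $u = \alpha_i^{(a)}$ and $v = \alpha_j^{(b)}$. The case $a=0,\,b=1$ is condition $(iv)$ read with $s=1$. The case $a=0,\,b=0$ follows by chaining condition $(iv)$ (with $s=0$) with condition $(iii)$ applied to the indices $-j,-i$. The case $a=1,\,b=1$ is symmetric: first use $(iii)$ to lower both superscripts to $0$, then apply $(iv)$. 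The remaining case $a=1,\,b=0$ I would obtain by complementation: using totality, $u \prec v$ is the negation of $v \prec u$, which is an instance of the already-settled case $a=0,\,b=1$ for the pair $v,u$, and negating both sides of that equivalence yields $\overline{v}\prec\overline{u}$. Each appeal to $(iii)$ and $(iv)$ is used in biconditional form, which is legitimate because applying the stated implication to the transposed pair supplies its converse. I expect this case analysis to be the main obstacle, since condition $(iv)$ only constrains pairs whose left letter carries superscript $0$, and the superscript-$1$ cases must be recovered indirectly from $(iii)$ and the totality of $\prec$.

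With this symmetry in hand, I would produce $\omega_1$ as follows. By point \ref{rk:symAnnotSk2} of Remark \ref{rk:symAnnotSk}, the $\alpha_{\cdot}^{(0)}$-subword of $\omega$ is $\alpha_{i_1}^{(0)}\cdots\alpha_{i_n}^{(0)}\alpha_{-i_n}^{(0)}\cdots\alpha_{-i_1}^{(0)}$ with $\{|i_1|,\ldots,|i_n|\} = \llbracket 1,n\rrbracket$. Set $I = \{i_1,\ldots,i_n\}$, let $\omega_1$ be the subword of $\omega$ keeping only the letters whose index lies in $I$, and let $\omega_2$ be the complementary subword, on indices in $-I$. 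Since $I$ contains exactly one of $\{k,-k\}$ for each $k\in\llbracket 1,n\rrbracket$ and every letter of ${\mathcal A}_{2n}$ occurs once, $\omega_1$ consists precisely of the $2n$ letters $\alpha_{i_k}^{(0)},\alpha_{i_k}^{(1)}$; conditions $(ii)$ and $(iii)$ for $\omega_1$ are the restrictions of the corresponding conditions for $\omega$, so $\omega_1$ is an annotated $1$-sketch in the sense of Definition \ref{definition: annosketch}.

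Finally I would identify $\omega_2$ with $\overline{\omega}_1$. The involution $p\mapsto 4n+1-p$ of $\llbracket 1,4n\rrbracket$ sends each position $p$ to a position carrying $\overline{w_p}$, by the symmetry proved above; since barring flips the sign of the index, it carries the positions of the $I$-letters bijectively onto the positions of the $(-I)$-letters. Listing the positions of $\omega_1$ in increasing order $p_1 < \cdots < p_{2n}$, the positions of $\omega_2$ in increasing order are $4n+1-p_{2n} < \cdots < 4n+1-p_1$, whose letters are $\overline{w_{p_{2n}}},\ldots,\overline{w_{p_1}}$; this is exactly $\overline{\omega}_1$. As the positions of $\omega_1$ and $\omega_2$ partition $\llbracket 1,4n\rrbracket$, the word $\omega$ is the composition, i.e. the shuffle, of the subsequences $\omega_1$ and $\omega_2 = \overline{\omega}_1$, which is the asserted decomposition.
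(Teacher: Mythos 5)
Your proof is correct and takes essentially the same route as the paper: you construct the same $\omega_1$ (the subword on the indices of the first $n$ $\alpha_{.}^{(0)}$-letters, identified via Remark \ref{rk:symAnnotSk}), check it is an annotated 1-sketch, and show the complementary subword is $\overline{\omega}_1$. The only difference is one of detail: you make fully explicit the palindromic symmetry $w_p = \overline{w_{4n+1-p}}$ and its four-case derivation from conditions $(iii)$ and $(iv)$, which the paper compresses into a one-sentence appeal to condition $(iv)$.
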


\begin{proof}
  We define $\omega_1$ as the subword of $\omega$ composed of the $n$
  leftmost $ \alpha_{.}^{(0)}$ letters and the corresponding
  $\alpha_{.}^{(1)}$ letters (if $ \alpha_{i}^{(0)}$ appears in
  $\omega_1$ then $ \alpha_{i}^{(1)}$ appears in $\omega_1$). 
  Remark \ref{rk:symAnnotSk}(\ref{rk:symAnnotSk2}) implies that
  $\alpha_{i}^{(0)}$ and $\alpha_{-i}^{(0)}$ cannot both belong to the
  set of the $n$ leftmost $ \alpha_{.}^{(0)}$ letters of
  $\omega$. Thus, it  is easy to see that $\omega_1$ is an annotated 1-sketch.

  This remark and condition $(iv)$ of Definition \ref{def:symAnnotSk} also imply that
  $\omega_2$ the subword of $\omega$ composed of the letters not in
  $\omega_1$ is the symmetric of $\omega_1$.  
\end{proof}

\begin{example} % $\omega = \alpha_{-2}^{(0)}
% \alpha_{1}^{(0)}
% \alpha_{-2}^{(1)}
% \alpha_{3}^{(0)}
% \alpha_{-3}^{(0)}
% \alpha_{1}^{(1)}
% \alpha_{-1}^{(0)}
% \alpha_{3}^{(1)}
% \alpha_{-3}^{(1)}
% \alpha_{2}^{(0)}
% \alpha_{-1}^{(1)}
% \alpha_{2}^{(1)}$ is a symmetric annotated 1-sketch of size 6.
$\omega$ of Example \ref{omega} is composed of $\omega_1 =\alpha_{-2}^{(0)}
\alpha_{1}^{(0)}
\alpha_{-2}^{(1)}
\alpha_{3}^{(0)}
\alpha_{1}^{(1)}
\alpha_{3}^{(1)}\in A_{3,4}$ and $\overline{\omega}_1$.
\end{example}

Conversely, for any annotated 1-sketch $\omega_1$, we can construct a set
of  symmetric annotated 1-sketches, the result of shuffles
between $\omega_1$ and $\overline{\omega}_1$. We first give the definition of these
shuffles and then prove the assertion.

\begin{definition}
\label{def:shuffles}
  Let $\psi = \alpha_{j_1}^{(1)}\ldots \alpha_{j_k}^{(1)}$. We define
  the set of shuffles $\psi\bowtie \overline{\psi}$ recursively with
  $\psi\bowtie \overline{\psi} = \{\epsilon\}$ if $\psi$ is the empty word $\epsilon$, as the
  set of following words:
  \begin{itemize}
  \item $\alpha_{-j_k}^{(0)} \psi'\bowtie \overline{\psi'}
    \alpha_{j_k}^{(1)}$ with $\psi' = \alpha_{j_1}^{(1)}\ldots
    \alpha_{j_{k-1}}^{(1)}$ ($\psi'=\epsilon$ if $k=1$),
  \item $\alpha_{j_1}^{(1)}\ldots \alpha_{j_i}^{(1)}\alpha_{-j_k}^{(0)} \psi'\bowtie \overline{\psi'}
    \alpha_{j_k}^{(1)}\alpha_{-j_i}^{(0)}\ldots \alpha_{-j_1}^{(0)}$ with $\psi' = \alpha_{j_{i+1}}^{(1)}\ldots
    \alpha_{j_{k-1}}^{(1)}$ ($\psi'=\epsilon$ if $i=k-1$), $\forall 1\leq i \leq k-1$,  
  \item $\alpha_{j_1}^{(1)}\ldots \alpha_{j_k}^{(1)}\alpha_{-j_k}^{(0)} \ldots \alpha_{-j_1}^{(0)}$. 
  \end{itemize}
%$\psi\bowtie \overline{\psi} = \emptyset$ if $\psi=\epsilon$.
\end{definition}

\begin{example} The set of shuffles $\psi\bowtie \overline{\psi}$ with
  $\psi = \alpha_{j_1}^{(1)} \alpha_{j_2}^{(1)}$ is composed of the
  four words $\alpha_{-j_2}^{(0)} \alpha_{-j_1}^{(0)}\alpha_{j_1}^{(1)}
\alpha_{j_2}^{(1)}$, $\alpha_{-j_2}^{(0)} \alpha_{j_1}^{(1)}\alpha_{-j_1}^{(0)}
\alpha_{j_2}^{(1)}$, $\alpha_{j_1}^{(1)} \alpha_{-j_2}^{(0)}\alpha_{j_2}^{(1)}
\alpha_{-j_1}^{(0)}$ and $\alpha_{j_1}^{(1)} \alpha_{j_2}^{(1)}\alpha_{-j_2}^{(0)}
\alpha_{-j_1}^{(0)}$.
 %  \begin{center}
 % \includegraphics[width=100.110px] {logo19.pdf}
 % \end{center}
 
\end{example}

\begin{definition}
\label{def:sketchesshuffle}
Let   $\omega_1= \omega_0 \alpha_{j_n}^{(0)} \psi$ with $\psi=\alpha_{j_{s-n+1}}^{(1)}
\alpha_{j_{s-n+2}}^{(1)}... \alpha_{j_{n-1}}^{(1)} \alpha_{j_n}^{(1)}$,
be an annotated 1-sketch. Then 
%the set of shuffles  between $\omega_1$ and
%$\overline{\omega}_1$ is 
$\omega_1\bowtie \overline{\omega}_1 = \omega_0 \alpha_{j_n}^{(0)} \left[\psi
\bowtie \overline{\psi} \right ] \alpha_{-j_n}^{(1)} 
\overline{\omega}_0 = \{\omega_0 \alpha_{j_n}^{(0)} u \alpha_{-j_n}^{(1)} 
\overline{\omega}_0, u \in \psi
\bowtie \overline{\psi}\}$.

% For an annotated 1-sketch $w_1= w_0 \alpha_{j_n}^{(0)} \alpha_{j_{s-n+1}}^{(1)} \alpha_{j_{s-n+2}}^{(1)}... \alpha_{j_{n-1}}^{(1)} \alpha_{j_n}^{(1)}$ with  the letter $\alpha_{j_n}^{(0)}$ is at position s and its symmetric $\overline{w}_1= \alpha_{-j_n}^{(0)}...\alpha_{-j_{s-n+2}}^{(0)} \alpha_{-j_{s-n+1}}^{(0)} \alpha_{-j_n}^{(1)} \overline{w}_0$. We assume that $\alpha_{-j_{n}}^{(0)}$ appear after  $\alpha_{j_{n}}^{(0)}$, the shuffles  between $w_1$ and $\overline{w}_1$ are the set of the words which have forms
% \begin{itemize}
%  \item $w_0 \alpha_{j_n}^{(0)}\alpha_{-j_n}^{(0)}[\alpha_{j_{s-n+1}}^{(1)} \alpha_{j_{s-n+2}}^{(1)}... \alpha_{j_{n-1}}^{(1)} \bowtie  \alpha_{-j_{n-1}}^{(0)}...\alpha_{-j_{s-n+2}}^{(0)} \alpha_{-j_{s-n+1}}^{(0)}] 
%  \alpha_{j_n}^{(1)}\alpha_{-j_n}^{(1)}\overline{w}_0$.

%  \item $w_0 \alpha_{j_n}^{(0)}\alpha_{j_{s-n+1}}^{(1)}... \alpha_{j_{k}}^{(1)} \alpha_{-j_n}^{(0)} [ \alpha_{j_{k+1}}^{(1)}... \alpha_{j_{n-1}}^{(1)} \bowtie \alpha_{-j_{n-1}}^{(0)} \alpha_{-j_{n-2}}^{(0)}... \alpha_{-j_{k+1}}^{(0)} ]
%   \alpha_{j_{n}}^{(1)}  \alpha_{-j_{k}}^{(0)}...\alpha_{-j_{s-n+1}}^{(0)} \alpha_{-j_{n}}^{(1)} \overline{w}_0$
%   for $ s-n+1 \le k \le n-1$.
% \end{itemize}
\end{definition}
%   Let $\omega_1 $ be an annotated 1-sketch,
%   $\omega_1= \omega_0 \alpha_{j_n}^{(0)} \alpha_{j_{s-n+1}}^{(1)}
%   \alpha_{j_{s-n+2}}^{(1)}... \alpha_{j_{n-1}}^{(1)}
%   \alpha_{j_n}^{(1)}$
%   with letter $\alpha_{j_n}^{(0)}$ at position s and its symmetric
%   $\overline{\omega}_1=
%   \alpha_{-j_n}^{(0)}...\alpha_{-j_{s-n+2}}^{(0)}
%   \alpha_{-j_{s-n+1}}^{(0)} \alpha_{-j_n}^{(1)}
%   \overline{\omega}_0$.
%   The shuffles ($\bowtie$) between $\omega_1$ and
%   $\overline{\omega}_1$ is the set of words such that
%   $\alpha_{-j_{n}}^{(0)}$ appears after $\alpha_{j_{n}}^{(0)}$ and the
%   insertion of
%   $ \alpha_{j_{k}}^{(1)}
%   \alpha_{j_{k+1}}^{(1)}... \alpha_{j_{n-1}}^{(1)}$
%   and
%   $ \alpha_{-j_{n-1}}^{(0)}
%   \alpha_{-j_{n-2}}^{(0)}... \alpha_{-j_{k}}^{(0)}$
%   for $ s-n+1 \le k \le n-1$ such that they satisfy the properties:
% \begin{itemize}
% \item   $\alpha_{j_{u}}^{(1)}$ appears before $\alpha_{j_{u+1}}^{(1)}$ and $\alpha_{-j_{u+1}}^{(0)}$ appears before $\alpha_{-j_{u}}^{(0)}$,

% \item  If $\alpha_{j_{u}}^{(1)}$ appears before $\alpha_{-j_{v}}^{(0)}$ then $\alpha_{j_{v}}^{(1)}$ appears before $\alpha_{-j_{u}}^{(0)}$,

% \item  If $\alpha_{-j_{v}}^{(0)}$ appears before $\alpha_{j_{u}}^{(1)}$ then $\alpha_{-j_{u}}^{(0)}$ appears before $\alpha_{j_{v}}^{(1)}$.
% \end{itemize}

\begin{proposition}
\label{proposition: shufflesym}
  For any annotated 1-sketch $\omega_1$ of size $n$, 
%the set
  $\omega_1\bowtie \overline{\omega}_1 \subset D^{(1)}(2n)$. 
%is a set of symmetric annotated 1-sketches of size $2n$.
\end{proposition}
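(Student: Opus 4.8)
The plan is to fix an arbitrary $u \in \psi \bowtie \overline{\psi}$ and to check that $\omega = \omega_0 \alpha_{j_n}^{(0)} u\, \alpha_{-j_n}^{(1)} \overline{\omega}_0$ satisfies the four conditions of Definition \ref{def:symAnnotSk}. Writing $\psi = \alpha_{j_{n-k+1}}^{(1)} \cdots \alpha_{j_n}^{(1)}$, so that $\overline{\psi} = \alpha_{-j_n}^{(0)} \cdots \alpha_{-j_{n-k+1}}^{(0)}$, I would first record, by a short induction on $k=|\psi|$ following the three cases of Definition \ref{def:shuffles}, that every $u$ in $\psi \bowtie \overline{\psi}$ is a genuine interleaving of $\psi$ and $\overline{\psi}$: its $\alpha_{\cdot}^{(1)}$ letters occur in the order $\alpha_{j_{n-k+1}}^{(1)}, \ldots, \alpha_{j_n}^{(1)}$ and its $\alpha_{\cdot}^{(0)}$ letters in the order $\alpha_{-j_n}^{(0)}, \ldots, \alpha_{-j_{n-k+1}}^{(0)}$.

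With this in hand, the first real step is to compute the two subwords of $\omega$ explicitly. Since $\omega_1$ is an annotated 1-sketch, its $\alpha_{\cdot}^{(0)}$ letters read $\alpha_{j_1}^{(0)}, \ldots, \alpha_{j_n}^{(0)}$ and, by condition (iii), its $\alpha_{\cdot}^{(1)}$ letters read $\alpha_{j_1}^{(1)}, \ldots, \alpha_{j_n}^{(1)}$; passing to $\overline{\omega}_0$ reverses the order and swaps the superscript. Collecting the contributions of $\omega_0$, $\alpha_{j_n}^{(0)}$, $u$, $\alpha_{-j_n}^{(1)}$ and $\overline{\omega}_0$, I would show that both the $\alpha_{\cdot}^{(0)}$-subword and the $\alpha_{\cdot}^{(1)}$-subword of $\omega$ equal $\alpha_{j_1}^{(\cdot)} \cdots \alpha_{j_n}^{(\cdot)} \alpha_{-j_n}^{(\cdot)} \cdots \alpha_{-j_1}^{(\cdot)}$ (with the relevant superscript). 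Because $\{|j_1|, \ldots, |j_n|\} = \llbracket 1,n\rrbracket$, every letter of $\mathcal A_{2n}$ occurs exactly once, which is condition (i); and the two subwords carry the same index sequence, which is condition (iii). Notably this step uses only that $u$ is a shuffle, not which shuffle it is.

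For condition (ii) I would argue by locating, for each index $i$, the pair $\alpha_i^{(0)}, \alpha_i^{(1)}$ among the blocks $\omega_0, \alpha_{j_n}^{(0)}, u, \alpha_{-j_n}^{(1)}, \overline{\omega}_0$. For $i=j_m$ the letter $\alpha_{j_m}^{(0)}$ lies in $\omega_0$ or equals $\alpha_{j_n}^{(0)}$, hence precedes $\alpha_{j_m}^{(1)}$, which lies in $\omega_0$ (when $m\le n-k$) or in $u$; the ordering inside $\omega_0$ is inherited from the 1-sketch $\omega_1$. For $i=-j_m$ the letter $\alpha_{-j_m}^{(0)}$ lies in $u$ or in $\overline{\omega}_0$ and precedes $\alpha_{-j_m}^{(1)}$, either because $\alpha_{-j_n}^{(1)}$ sits just after $u$, or because inside $\overline{\omega}_0$ the symmetric map has reversed the order of the pair coming from $\omega_0$. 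This is routine case-checking.

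The only condition requiring the specific shape of $u$ is (iv), and the idea I would use is that $\omega$ is self-symmetric, i.e. $\overline{\omega}=\omega$. The step I expect to be the real obstacle is proving that every $u \in \psi \bowtie \overline{\psi}$ satisfies $\overline{u}=u$; I would do this by induction on $|\psi|$, checking that each of the three constructions in Definition \ref{def:shuffles} is stable under the symmetric map once the inner shuffle $v \in \psi' \bowtie \overline{\psi'}$ is, the bookkeeping of the $\pm j_m$ indices being the delicate part. Granting $\overline{u}=u$, and using that the symmetric map is an involutive anti-automorphism with $\overline{\alpha_{j_n}^{(0)}}=\alpha_{-j_n}^{(1)}$, I obtain $\overline{\omega} = \omega_0 \alpha_{j_n}^{(0)} \overline{u}\, \alpha_{-j_n}^{(1)} \overline{\omega}_0 = \omega$. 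Finally I would deduce (iv): if $\alpha_i^{(0)}$ sits at position $t$ then $\alpha_{-i}^{(1)}$ sits at position $4n+1-t$, so for $s=1$ the relation $\alpha_i^{(0)}$ before $\alpha_j^{(1)}$ reflects to $\alpha_{-j}^{(0)}$ before $\alpha_{-i}^{(1)}$; for $s=0$ the same reflection turns $\alpha_i^{(0)}$ before $\alpha_j^{(0)}$ into $\alpha_{-j}^{(1)}$ before $\alpha_{-i}^{(1)}$, which transfers to superscript $0$ via the condition (iii) established above (equivalently, directly from the palindromic-antisymmetric shape of the $\alpha_{\cdot}^{(0)}$-subword). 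This completes the verification that $\omega \in D^{(1)}(2n)$.
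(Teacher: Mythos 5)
Your proposal is correct, and for the one condition that actually requires work it takes a genuinely different route from the paper. The paper dismisses conditions (i)--(iii) as immediate (the letters of $\omega_1$ and of $\overline{\omega}_1$ each appear as subsequences in their original relative order) and then verifies condition (iv) directly, by writing an element of $\omega_1\bowtie\overline{\omega}_1$ in one of the two recursive shapes coming from Definition \ref{def:shuffles} and checking, recursively, that $\alpha_{-j_n}^{(0)}$ precedes each $\alpha_{j_t}^{(1)}$ and $\alpha_{-j_t}^{(0)}$ precedes $\alpha_{j_n}^{(1)}$ for the relevant $t$. You instead prove the structural statement that every $u\in\psi\bowtie\overline{\psi}$ satisfies $\overline{u}=u$ (a clean induction over the three constructions, which does check out), conclude $\overline{\omega}=\omega$ since the bar is an involutive anti-automorphism, and read off (iv) from the resulting positional reflection $t\mapsto 4n+1-t$, using (iii) to transfer the $s=0$ case back to superscript $0$. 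Your argument is more conceptual and isolates a reusable fact (self-symmetry of every shuffle, which is essentially equivalent to condition (iv) once (i)--(iii) hold), at the cost of front-loading an extra induction; the paper's version is shorter but leaves the reader to unwind the recursion. Your explicit computation of the two subwords to get (i) and (iii) is also a welcome expansion of what the paper calls ``straightforward.''
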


\begin{example}
 $\omega_1 =\alpha_{-2}^{(0)}
\alpha_{1}^{(0)}
\alpha_{-2}^{(1)}
\alpha_{3}^{(0)}
\alpha_{1}^{(1)}
\alpha_{3}^{(1)} \in A_{3,4}$.
Then $\omega_1\bowtie \overline{\omega}_1$ is the set of $4$ elements:\\
$\alpha_{-2}^{(0)}
\alpha_{1}^{(0)}
\alpha_{-2}^{(1)}
\alpha_{3}^{(0)}
\alpha_{-3}^{(0)}
\alpha_{1}^{(1)}
\alpha_{-1}^{(0)}
\alpha_{3}^{(1)}
\alpha_{-3}^{(1)}
\alpha_{2}^{(0)}
\alpha_{-1}^{(1)}
\alpha_{2}^{(1)}$,
$\alpha_{-2}^{(0)}
\alpha_{1}^{(0)}
\alpha_{-2}^{(1)}
\alpha_{3}^{(0)}
\alpha_{-3}^{(0)}
\alpha_{-1}^{(0)}
\alpha_{1}^{(1)}
\alpha_{3}^{(1)}
\alpha_{-3}^{(1)}
\alpha_{2}^{(0)}
\alpha_{-1}^{(1)}
\alpha_{2}^{(1)}$,
$\alpha_{-2}^{(0)}
\alpha_{1}^{(0)}
\alpha_{-2}^{(1)}
\alpha_{3}^{(0)}
\alpha_{1}^{(1)}
\alpha_{-3}^{(0)}
\alpha_{3}^{(1)}
\alpha_{-1}^{(0)}
\alpha_{-3}^{(1)}
\alpha_{2}^{(0)}
\alpha_{-1}^{(1)}
\alpha_{2}^{(1)}$,
$\alpha_{-2}^{(0)}
\alpha_{1}^{(0)}
\alpha_{-2}^{(1)}
\alpha_{3}^{(0)}
\alpha_{1}^{(1)}
\alpha_{3}^{(1)}
\alpha_{-3}^{(0)}
\alpha_{-1}^{(0)}
\alpha_{-3}^{(1)}
\alpha_{2}^{(0)}
\alpha_{-1}^{(1)}
\alpha_{2}^{(1)}.$ 
\end{example}

\begin{proof}
We must prove that any word of $\omega_1\bowtie \overline{\omega}_1$ is a
symmetric annotated 1-sketch, meaning that it verifies conditions
$(i)$ to $(iv)$ of Definition \ref{def:symAnnotSk}.

Conditions $(i)$, $(ii)$ and $(iii)$ are straightforward since $\omega_1$ and
$\overline{\omega}_1$ are annotated 1-sketches, each one the symmetric of
the other, and their letters are not permuted.

Let $\omega_1= \omega_0 \alpha_{j_n}^{(0)} \psi$ with $\psi=\alpha_{j_{s-n+1}}^{(1)}
\alpha_{j_{s-n+2}}^{(1)}... \alpha_{j_{n-1}}^{(1)}\alpha_{j_n}^{(1)}$. A
word of $\omega_1 \bowtie \overline{\omega}_1$ is either
$\omega_1\overline{\omega}_1$ which obviously verifies condition $(iv)$, or has one of the
following form and we can thus check recursively that it verifies condition $(iv)$ :
\begin{itemize}
\item $\omega_0 \alpha_{j_n}^{(0)} \alpha_{-j_n}^{(0)} \left [\psi' \bowtie
    \overline{\psi'}\right ] \alpha_{j_n}^{(1)} \alpha_{-j_n}^{(1)}
  \overline{\omega}_0$, $\psi'=\alpha_{j_{s-n+1}}^{(1)}
\alpha_{j_{s-n+2}}^{(1)}... \alpha_{j_{n-1}}^{(1)}$, and thus
 $\alpha_{-j_n}^{(0)}$ appears before
  $\alpha_{j_t}^{(1)}$ and $\alpha_{-j_t}^{(0)}$ appears before
  $\alpha_{j_n}^{(1)}$, for any $t$ in $\llbracket s-n+1,
  n-1\rrbracket$,
\item  $\omega_0
  \alpha_{j_n}^{(0)}\alpha_{j_{s-n+1}}^{(1)}... \alpha_{j_{k}}^{(1)}
  \alpha_{-j_n}^{(0)} [ \psi' \bowtie \overline{\psi'} ]
  \alpha_{j_{n}}^{(1)}
  \alpha_{-j_{k}}^{(0)}...\alpha_{-j_{s-n+1}}^{(0)}
  \alpha_{-j_{n}}^{(1)} \overline{\omega}_0$,
  $\psi'=\alpha_{j_{k+1}}^{(1)}... \alpha_{j_{n-1}}^{(1)}$, and thus $\alpha_{-j_n}^{(0)}$ appears before
  $\alpha_{j_t}^{(1)}$ and $\alpha_{-j_t}^{(0)}$ appears before
  $\alpha_{j_n}^{(1)}$, for any $t$ in $\llbracket k+1,
  n-1\rrbracket$.
%$s-n+1 \leq k \leq n-1$.
\end{itemize}
\end{proof}

\subsection{Bijection between regions and symmetric annotated 1-sketches}
A symmetric annotated 1-sketch corresponds to a specific order between the variables $x_i$ and $1+x_i$ for any $i$ in
$\llbracket -n,n\rrbracket \setminus \left\{0 \right\}$. We show here that these orders are bijectively related to the coordinates of the points of the regions of the type $C$ Catalan arrangement.
\begin{proposition}
\label{prop:bijArr2sk}
There is a one to one correspondence between regions of the type $C$
Catalan arrangement in $\R^n$ and the symmetric annotated 1-sketches of size $2n$.
\end{proposition}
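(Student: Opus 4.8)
The plan is to build the correspondence explicitly, by realizing each region through the total order it induces on the $4n$ quantities of ${\mathcal A}_{2n}$, and then showing that this order is exactly a symmetric annotated $1$-sketch.

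First I would observe that, writing $\alpha_a^{(s)} = s + x_a$ with $x_{-i} = -x_i$, the hyperplanes of $\mathcal{C}_{\{-1,0,1\}}(n)$ are \emph{precisely} the loci $\alpha_a^{(s)} = \alpha_b^{(t)}$ with $(a,s) \ne (b,t)$: for instance $x_i - x_j = 1$ reads $\alpha_i^{(0)} = \alpha_j^{(1)}$, $x_i + x_j = -1$ reads $\alpha_{-i}^{(0)} = \alpha_j^{(1)}$, and $2x_i = 0$ reads $\alpha_i^{(0)} = \alpha_{-i}^{(0)}$, while $\alpha_a^{(0)} = \alpha_a^{(1)}$ never occurs. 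Hence on any region $R$ the $4n$ numbers $\alpha_a^{(s)}$ are pairwise distinct and their total order is constant on $R$; denote it $\Phi(R)$. I would then verify that $\Phi(R)$ satisfies conditions $(i)$--$(iv)$ of Definition \ref{def:symAnnotSk}: $(i)$ is immediate, $(ii)$ follows from $x_a < 1 + x_a$, $(iii)$ from the fact that $t \mapsto t+1$ preserves order (so the $\alpha_.^{(0)}$ and $\alpha_.^{(1)}$ subwords are identically ordered), and $(iv)$ from $x_{-i} = -x_i$, since $\alpha_i^{(0)} < \alpha_j^{(s)}$ reads $x_i - x_j < s$, a condition symmetric under $(i,j) \mapsto (-j,-i)$ and thus equivalent to $\alpha_{-j}^{(0)} < \alpha_{-i}^{(s)}$. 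Therefore $\Phi$ maps regions into $D^{(1)}(2n)$.

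Next I would establish injectivity. If two points $p,p'$ induce the same order, then along the segment $[p,p']$ every difference $\alpha_a^{(s)} - \alpha_b^{(t)}$ is an affine, hence monotone, function with the same nonzero sign at both endpoints; so it never vanishes on $[p,p']$, the segment avoids all hyperplanes, and $p,p'$ lie in one region. As distinct regions give distinct orders, $\Phi$ is injective and each fiber is a single region.

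The remaining, and main, point is surjectivity: every $\omega \in D^{(1)}(2n)$ must be realized by a point. By Remark \ref{rk:symAnnotSk} the $\alpha_.^{(0)}$-subword is $\alpha_{i_1}^{(0)}\cdots \alpha_{i_n}^{(0)} \alpha_{-i_n}^{(0)}\cdots \alpha_{-i_1}^{(0)}$, so I would look for reals $x_{i_1} < \cdots < x_{i_n} < 0$ whose lows $\{x_a\}$ and highs $\{x_a + 1\}$ interleave exactly as prescribed. Writing the lows as $L_1 < \cdots < L_{2n}$ with matching highs $L_m + 1$ (the matching being order-preserving by $(iii)$), this amounts to choosing consecutive gaps $g_k > 0$ so that every matched low--high interval has total gap exactly $1$; it is a linear feasibility problem, and by Proposition \ref{prop:sketchdecomp} the configuration is moreover symmetric about $1/2$, which halves the constraints. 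I expect this feasibility to be the crux. I would derive it from the ballot structure forced by $(ii)$--$(iii)$ (every prefix of $\omega$ has at least as many $\alpha_.^{(0)}$ as $\alpha_.^{(1)}$ letters) together with the palindromic symmetry $(iv)$, either by induction on $n$ (peeling off a suitable matched pair and reinserting it in the unique gap dictated by $\omega$) or by checking directly that the associated system of difference constraints $L_m - L_{m'} > 1$ and $L_m - L_{m'} < 1$ admits no contradictory cycle. Once such a point is produced, the injectivity argument above identifies its region as the unique $\Phi$-preimage of $\omega$, completing the bijection. I would finally note that surjectivity can also be obtained a posteriori: $\Phi$ is already an injection into $D^{(1)}(2n)$, so once Sections \ref{sec:orders2forests}--\ref{sec:enumeration} show $|D^{(1)}(2n)| = 2^n n!\binom{2n}{n}$, the known number of regions, $\Phi$ must be onto; but the direct argument is preferable as it keeps the correspondence self-contained.
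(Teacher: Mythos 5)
Your map is the paper's map $\sigma$, and your verification of conditions $(i)$--$(iv)$ and your injectivity argument (constancy on regions plus the segment/sign argument) match the paper's proof in substance. The problem is the surjectivity step, which you yourself flag as ``the crux'' and then do not actually prove: you reduce it to a linear feasibility question and offer two candidate strategies (induction by peeling off a matched pair, or absence of contradictory cycles in the system of difference constraints $x_i - x_j \lessgtr s$) without carrying either one out. As written, the proposal therefore establishes only that $\sigma$ is an injection into $D^{(1)}(2n)$, not a bijection; the fallback via the enumeration of $D^{(1)}(2n)$ in the later sections and the known region count would close the logical gap but, as you note, abandons the self-contained correspondence.

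The paper closes this gap with a short explicit construction rather than a feasibility argument: reading $\omega = w_1\cdots w_{4n}$ from left to right, it sets $z_p = z_{p-1} + 1/(2n+1)$ and $x_i = z_p$ whenever $w_p = \alpha_i^{(0)}$, and $z_p = x_i + 1$ whenever $w_p = \alpha_i^{(1)}$. The spacing $1/(2n+1)$ is chosen so that the resulting sequence $z_1 < \cdots < z_{4n}$ is increasing and realizes exactly the order $\omega$, producing a concrete preimage point (this monotonicity should itself be checked, using conditions $(ii)$--$(iii)$, but it is a one-step verification rather than an open-ended feasibility analysis). If you want to keep your write-up self-contained, you should either adopt such an explicit witness or actually execute one of your two proposed strategies; in the difference-constraint formulation this means proving that conditions $(i)$--$(iv)$ exclude any nonpositive cycle, which is precisely the content you have deferred.
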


\begin{proof}
Observe that for all $x \in {\mathbb{R}}^n$, if there exist $i,j \in
\llbracket -n,n\rrbracket \setminus \left\{0 \right\}$ and $s,t \in
\left\{0,1 \right\}$ such that  $x_i +s =x_j +t$ then $x \in {\cup}_{
  H \in \mathcal{C}_{\left\{-1,0,1 \right\}} (n)} H$. Therefore,  for
any $x = \left\{ x_1,..., x_n \right\}$ that belongs to $ {\mathbb{R}}^n
\setminus {\cup}_{ H \in \mathcal{C}_{\left\{-1,0,1 \right\}} (n)} H$,
the elements of $ \left\{ x_i +s : i \in \llbracket -n,n\rrbracket \setminus
  \left\{0 \right\}, s \in \{0,1\} \right\}$ are all distinct, with
$x_{-i}= -x_i$ for all $i$. We define $ {\sigma} (x) =w_1
w_2...w_{4n}$, where $w_p = \alpha_{i}^{(s)}$ if $z_p =
x_i +s$ with $\left\{z_1 < z_2 <...< z_{4n} \right\} = \left\{ x_i +s
  : i \in \llbracket -n,n\rrbracket \setminus \left\{0 \right\}, s \in
  \{0,1\} \right\}$. 

${\sigma} (x)$ obviously satisfies conditions $(i)-(iii)$ of Definition
\ref{def:symAnnotSk}. 
% and since $x_i < x_i+1$, it satisfies also
% condition $(ii)$. Moreover, $x_i < x_j$ implies $x_i + 1 < x_j + 1$
% and condition $(iii)$ is verified. 
We now prove that ${\sigma} (x)$
satisfies condition $(iv)$ of Definition \ref{def:symAnnotSk}. Indeed,
if $\alpha_{i}^0$ appears before $\alpha_{j}^s$ with
$s \in \left\{0,1 \right\}$ then $x_i < x_j +s$, hence
$x_{-j} < x_{-i} +s$. It induces that $\alpha_{-j}^0$ appears before
$\alpha_{-i}^s$. Therefore ${\sigma} (x)$ is a symmetric annotated
1-sketch of size $2n$. The mapping ${\sigma}$ is constant over each
region of $\mathcal{C}_{\left\{-1,0,1 \right\}} (n)$. Thus,
${\sigma} $ is a mapping from the regions of
$\mathcal{C}_{\left\{-1, 0, 1 \right\}}(n)$ to ${D}^{(1)} (2n)$.

The mapping $\sigma$ satisfies, $ x_i - x_j < s$ if $\alpha_{i}^{(0)}$ appears before $\alpha_{j}^{(s)}$  and $x_i - x_j > s$ otherwise,  for all $i, j \in \llbracket -n,n\rrbracket \setminus \left\{0 \right\}$
and all $s \in \left\{0,1 \right\}$. Thus, $\sigma$ is injective. Finally, for any symmetric annotated 1-sketch $ \omega =w_1 w_2...w_{4n}$, there exists $ x \in  {\mathbb{R}}^n \setminus {\cup}_{ H \in \mathcal{C}_{\left\{-1,0,1
 \right\}} (n)} H$ such that $ \sigma(x) = \omega$. Indeed, we define $x \in {\sigma}^{-1} (\omega)$  and
$z_1, ...,  z_{4n}$ by applying the following rule for $p = 1, 2,..., 4n$: if
$w_ p =\alpha_{i}^{(0)}$ then 
$z_p = z_{p-1} + 1/(2n+1)$ and 
$ z_p = x_i$, while if $
w_ p =\alpha_{i}^{(1)}$ then $z_p = x_i +1$. Therefore ${\sigma}
$ is a bijection.% (see  Figure \ref{fig:bijCatalanSketches}).
\end{proof}

% \begin{example}
% \label{example:bijR2}
%   Bijection between the type C Catalan arrangement of $\R^2$ and the
%   symmetric annotated 1-sketches of size 4.
   %Hình ảnh minh họa=================================================
% \begin{figure}[h]
%   \centering
%    \includegraphics[width=390.650px] {logo11.pdf}
%   \caption{The Catalan arrangement $\mathcal{C}_{\left\{-1,0,1 \right\}}(2)$, and the symmetric
%     annotated 1-sketches corresponding to each region.}
%   \label{fig:bijCatalanSketches}
% \end{figure}
%\end{example}

\section{From orders to forests}
\label{sec:orders2forests}
In this section, we present a bijection between the symmetric
annotated 1-sketches and some rooted labeled ordered forests that we call symmetric
forests. We will first define these forests and then expose
the bijection.

\subsection{Symmetric forests}

In order to define a  symmetric forest, we need to
introduce the notion of sub-descendant in a forest. 

For any rooted labeled ordered forest $F$, we say that we read the
nodes of $F$ in BFS order if we list the labels of the nodes of $F$ in a
breadth-first search starting from the root.

\begin{definition}
  Let $i$ and $j$ be two nodes in a rooted ordered forest. We say that
  $i$ is a sub-descendant of $j$ if $i$ appears after $j$ and strictly
  before any child of $j$ in the BFS order. We
  also say that $i$ and $j$ satisfy the sub-descendant property (SDP)
  if $i$ is a sub-descendant of $j$ implies that $-j$  is a sub-descendant of $-i$.
\end{definition}

\begin{definition}
\label{definition:symrootedforest}
 A symmetric forest with $2n$ nodes is a rooted labeled ordered forest that satisfies:
 \begin{enumerate}[label=(\roman*)]
 \item the first $n$ nodes read in BFS order are labeled $e_1,
   ..., e_n$ such that $\{| e_1|,\ldots,|e_n|\} =\llbracket
   1,n\rrbracket$,
 \item the last $n$ nodes read in BFS order are labeled $e_{n+1},
   ..., e_{2n}$ such that $e_{n+j} = {-e}_{n-j+1}$ with $j \in \llbracket
   1,n\rrbracket$,
\item for every two nodes $i,j$, $i$ and $j$ satisfy the sub-descendant property.
 \end{enumerate}
We denote by $F_{S}(2n)$ the symmetric forests with $2n$ nodes.
\end{definition}

\begin{example}
    For the symmetric forest $G$ in Figure \ref{fig:symForest}, $1$ is a sub-descendant of $-2$ and $2$ is a sub-descendant of $-1$, hence $\left\{1,-2 \right\}$ satisfy the sub-descendant property. Moreover, $G \in F_{S}(6)$.
%\vspace{0.5cm}

 %Hình ảnh minh họa=================================================
 %   \begin{figure}[h]
 %     \centering
 %     \includegraphics[width=160.300px] {logo12.pdf}
 %     \caption{The symmetric rooted ordered forest $T \in F_{S}(6)$.}
 %   \label{fig:symOrdered}
 %   \end{figure}
 \end{example}

%\vspace{0.5cm}
As a matter of fact, a symmetric forest is composed
of two sub-forests where one is the symmetric of the other in the following sense :

\begin{definition}
\label{definition:symofrootedforest}
  Let $F$ be a rooted ordered forest defined on $n$ labeled nodes
  $e_1,.., e_n$. We define the symmetric of $F$ as a rooted ordered
  forest $\overline{F}$ with n labeled nodes $-e_n,..., -e_1$ such
  that for all $i \neq j \in  \llbracket 1,n\rrbracket$, $-e_i$ is a
  sub-descendant of $-e_j$ in $\overline{F}$ if and only if $e_j$ is a
  sub-descendant of $e_i$ in $F$.
\end{definition}

We now explain how to decompose a symmetric forest $G$
into a
forest $F$ and its symmetric. $F$ is the sub-forest of $G$ defined on
the first $n$ nodes read in BFS order. 

% \begin{example}
% \label{example:symforest}
%     For $T \in F_{S}(6)$ with $2n$ nodes $\left\{-2,1,3,-3,-1,2\right\}$ then $F$ and $\overline{F}$ are described in Figure \ref{fig:symForest}.
%Hình ảnh minh họa=================================================
     \begin{figure}[h]
       \centering
       \includegraphics[width=300.600px] {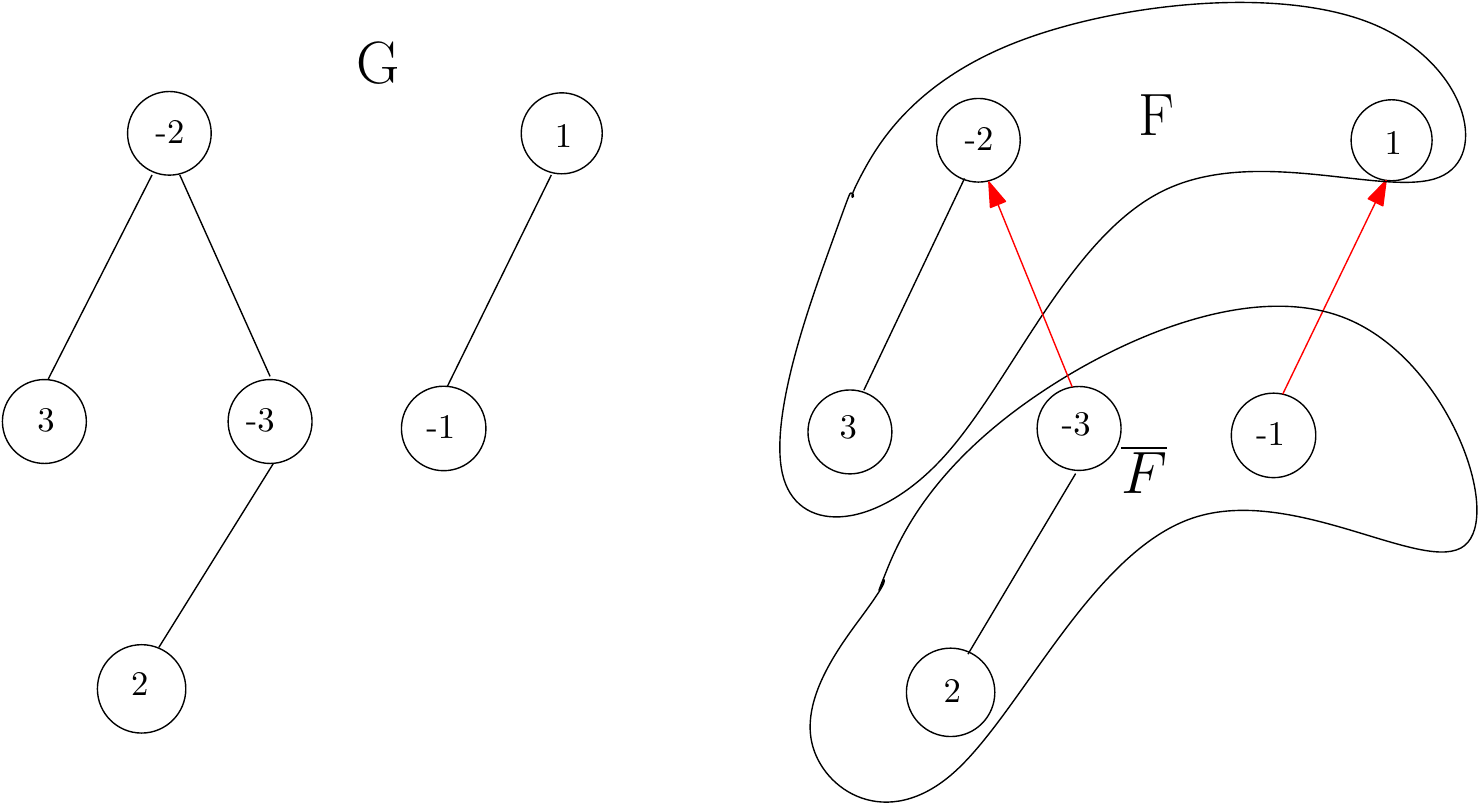}
       \caption{a symmetric forest $G \in F_{S}(6)$,
         result of a shuffle between $F$ and $\overline{F}$.}
       \label{fig:symForest}
     \end{figure}
% \end{example}

Thus we have that :
\begin{proposition}
\label{prop:forestdecomp}
  A symmetric forest with $2n$ nodes is the composition of a rooted labeled
  ordered forest with $n$ nodes and its symmetric.
\end{proposition}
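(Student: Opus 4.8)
The plan is to mirror the proof of Proposition~\ref{prop:sketchdecomp} exactly. Given a symmetric forest $G$ with $2n$ nodes, I define $F$ to be the sub-forest of $G$ induced on the first $n$ nodes read in BFS order, show that $F$ is a genuine rooted labeled ordered forest on $n$ nodes, and then show that the sub-forest $\overline{G}$ induced on the remaining $n$ nodes is precisely the symmetric $\overline{F}$ of Definition~\ref{definition:symofrootedforest}. Since $\overline{F}$ is characterized by its labels together with its sub-descendant relations, the whole argument reduces to matching labels and sub-descendant relations.

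First I would check that $F$ is well defined. In BFS order every node is preceded by its parent, so the first $n$ nodes are closed under taking parents and therefore induce a rooted ordered sub-forest of $G$, with sibling order inherited from $G$. Condition~(i) of Definition~\ref{definition:symrootedforest} gives that their labels $e_1,\ldots,e_n$ satisfy $\{|e_1|,\ldots,|e_n|\}=\llbracket 1,n\rrbracket$; as there are $n$ of them with pairwise distinct absolute values, no pair $e_i,-e_i$ can both occur, so $F$ is a rooted labeled ordered forest carrying exactly one label from each pair $\{k,-k\}$. This is the direct analogue of the use of Remark~\ref{rk:symAnnotSk}(\ref{rk:symAnnotSk2}) in the sketch case.

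It then remains to identify $\overline{G}$, the sub-forest on the last $n$ nodes (with any node whose parent lies among the first $n$ promoted to a root), with $\overline{F}$. By condition~(ii) its nodes are labeled $e_{n+1},\ldots,e_{2n}$ with $e_{n+j}=-e_{n-j+1}$, i.e.\ they are exactly $-e_n,\ldots,-e_1$, matching the labeling required of $\overline{F}$. To match the structure, by Definition~\ref{definition:symofrootedforest} it suffices to prove, for all $i\neq j$, that $-e_i$ is a sub-descendant of $-e_j$ in $\overline{G}$ if and only if $e_j$ is a sub-descendant of $e_i$ in $F$. I would obtain this from the chain
\begin{align*}
e_j \text{ is a sub-descendant of } e_i \text{ in } F
&\iff e_j \text{ is a sub-descendant of } e_i \text{ in } G\\
&\iff -e_i \text{ is a sub-descendant of } -e_j \text{ in } G\\
&\iff -e_i \text{ is a sub-descendant of } -e_j \text{ in } \overline{G},
\end{align*}
where the middle equivalence is condition~(iii), the SDP being symmetric because applying it to the pair $\{-e_i,-e_j\}$ yields the converse implication, and the two outer equivalences assert that the sub-descendant relation among the first (resp.\ last) $n$ nodes is unchanged when read in $G$ or in the induced sub-forest $F$ (resp.\ $\overline{G}$).

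The main obstacle is exactly these two outer equivalences, that is, the stability of the sub-descendant relation under restriction to the prefix and to the suffix sub-forest. Since being a sub-descendant compares the BFS positions of a node, of $j$, and of the first child of $j$, the delicate point is that passing to $F$ or $\overline{G}$ may alter these positions: the boundary at position $n$ can separate a node from some of its children, and in $\overline{G}$ an orphaned node is promoted to a root, which can a priori change the BFS order and hence the window defining a sub-descendant. I expect to settle this by working directly from the level-by-level structure of BFS, showing that conditions~(i)--(iii) force the boundary to interact with each node's children coherently enough that no sub-descendant relation among nodes of the same block is created or destroyed by the restriction; once this is established, the equivalence chain above closes the proof, and together with the labeling it gives $\overline{G}=\overline{F}$.
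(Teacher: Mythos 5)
Your proposal follows the paper's proof exactly: the paper likewise takes $F$ to be the sub-forest of $G$ on the first $n$ nodes in BFS order and asserts, directly from Definitions \ref{definition:symrootedforest} and \ref{definition:symofrootedforest}, that the sub-forest on the last $n$ nodes is $\overline{F}$. The one step you flag but only sketch --- that restricting to the prefix or suffix block neither creates nor destroys sub-descendant relations --- is indeed true (the induced sub-forest on the last $n$ nodes has the same relative BFS order as in $G$, by precisely the level-by-level argument you indicate, and all children of a second-half node lie in the second half), and the paper's own two-sentence proof passes over this point in silence, so your write-up is if anything more careful than the original.
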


\begin{proof} 
$F$ is the sub-forest of $G$ defined by the first $n$
nodes read in BFS order. Then the sub-forest of $G$ corresponding to
the $n$ last nodes read in BFS order is the forest, symmetric of $F$,
by Definition \ref{definition:symrootedforest} 
and Definition \ref{definition:symofrootedforest}.
\end{proof}

Conversely, any shuffle between any rooted labeled ordered forest $F$ and
its symmetric, is in bijection with a symmetric forest. We
first give the definition of a special leaf, then the definition of the shuffles
between a forest and its symmetric (see Figure
\ref{fig:symOrderedForest}) and finally we prove the
assertion.

\begin{definition}
 In a rooted ordered forest with $n$ labeled nodes $e_1, ..., e_n$
 such that $\{| e_1|,\ldots,|e_n|\}=\llbracket 1,n\rrbracket$, the special leaves are the leaves which are
 after the last internal node in the BFS order. If a forest $F$ has
 only leaves, we consider that its last internal node is a fictif node,
 parent of the leaves of $F$. Let us call $F_{n,s},
 1 \leq s \leq n$,  the set of rooted labeled ordered forests of size
 $n$ with $s$ special leaves. 
%Therefore, $e_{n-s+1},..., e_{n-1}, e_n$ are the $s$ special leaves.
\end{definition}

\begin{example}
  The rooted labeled ordered forest $F$ of Figure \ref{fig:symForest}, has two
  special leaves, $1$ and $3$.
\end{example}

\begin{definition}
\label{definition:shufflesrootedforest}
Let $F$ be a rooted ordered forest defined on $n$ labeled nodes
$e_1,.., e_n$, ordered in BFS order and such that
$\{|e_1|,\ldots,|e_n|\}=\llbracket 1,n\rrbracket$, with $s$ special leaves. The set of
shuffles between $F$ and its symmetric $\overline{F}$, $F \bowtie
\overline{F}$, is the set of
forests obtained when we connect $s$ edges from
$\left\{-e_n, -e_{n-1},...,-e_{n-s+ 1} \right\}$ to
$\left\{e_{n-s}, e_{n-s+1},...,e_{n-1}, e_n \right\}$ such that any
pair $(u,v)$, $u$ in $\left\{-e_n, -e_{n-1},...,-e_{n-s+ 1} \right\}$ and
$v$ in $\left\{e_{n-s},...,e_{n-1}, e_n \right\}$, satisfies the sub-descendant
property and the sequence of the
nodes read in BFS order is $e_1,\ldots, e_n, -e_n,
-e_{n-1},...,-e_1$. We say that $\left\{-e_n, -e_{n-1},...,-e_{n-s+ 1} \right\}$ and
$\left\{e_{n-s},...,e_{n-1}, e_n \right\}$ satisfy the sub-descendant
property.
% and BFS order.
\end{definition}

\begin{proposition}
\label{proposition: shuffleforests}
  For any rooted ordered forest $F$ with $n$ nodes labeled with
  $e_1,\ldots, e_n$ such that $\{| e_1|,\ldots,|e_n|\}=\llbracket
  1,n\rrbracket$, the set $F \bowtie \overline{F}$ is
  a set of symmetric forests with $2n$ nodes.
\end{proposition}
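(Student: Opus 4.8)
The plan is to show that each shuffle in $F \bowtie \overline{F}$ satisfies the three conditions of Definition \ref{definition:symrootedforest}. The construction already guarantees that the nodes read in BFS order are $e_1,\ldots,e_n,-e_n,-e_{n-1},\ldots,-e_1$, so conditions $(i)$ and $(ii)$ are immediate: the first $n$ labels are exactly $e_1,\ldots,e_n$ with $\{|e_1|,\ldots,|e_n|\}=\llbracket 1,n\rrbracket$, and the last $n$ labels satisfy $e_{n+j}=-e_{n-j+1}$ by the prescribed order. The entire difficulty is therefore concentrated in verifying condition $(iii)$, namely that \emph{every} pair of nodes $i,j$ in the shuffled forest satisfies the sub-descendant property (SDP).

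First I would partition the pairs $\{i,j\}$ according to which halves the two nodes fall in. There are three cases. If both $i$ and $j$ lie in the first half $\{e_1,\ldots,e_n\}$, the SDP holds because $F$ is a forest and the sub-descendant relation among these nodes is inherited unchanged from $F$; symmetrically, if both lie in the second half, the relation is inherited from $\overline{F}$, where by Definition \ref{definition:symofrootedforest} the symmetric structure forces $-e_i$ to be a sub-descendant of $-e_j$ exactly when $e_j$ is a sub-descendant of $e_i$ in $F$, so the SDP is built into the definition of $\overline{F}$. The substantive case is a mixed pair, with one node in the first half and one in the second.

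The main obstacle is this mixed case, and here the key observation is where the only new sub-descendant relations across the two halves can arise. A node from the second half, $-e_k$, can be a sub-descendant of a node $e_m$ in the first half only at the ``seam'' created by the $s$ new edges of the shuffle, i.e.\ when $-e_k$ is attached beneath one of the special-leaf nodes $e_m \in \{e_{n-s},\ldots,e_n\}$; conversely a first-half node can become a sub-descendant of a second-half node only through the symmetric attachments. The plan is to read off precisely which cross relations the shuffle introduces: by Definition \ref{definition:shufflesrootedforest} we connect edges from $\{-e_n,\ldots,-e_{n-s+1}\}$ to $\{e_{n-s},\ldots,e_n\}$, and the definition explicitly demands that each such pair $(u,v)$ satisfies the SDP. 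I would then check that no \emph{additional} unwanted sub-descendant relations are created beyond these designated pairs — this follows because the BFS order is fixed as $e_1,\ldots,e_n,-e_n,\ldots,-e_1$, so a node $i$ is a sub-descendant of $j$ precisely when $i$ immediately follows $j$ (before any child of $j$) in this fixed listing, and the only cross-half adjacencies of this kind occur exactly at the newly attached edges.

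Finally I would close the argument by invoking the symmetry of the construction: the shuffle attaches the second-half seam edges as the mirror image of the first-half structure, so whenever the designated pairing places $-e_k$ as a sub-descendant of $e_m$, the same pairing forces $e_k$ (equivalently $-(-e_k)$) to be a sub-descendant of the mirror node, which is exactly the SDP requirement $i$ sub-descendant of $j \Rightarrow -j$ sub-descendant of $-i$. Since Definition \ref{definition:shufflesrootedforest} imposes the SDP on every connecting pair by fiat, and the within-half cases are handled by inheritance from $F$ and $\overline{F}$, all pairs satisfy condition $(iii)$, and hence every element of $F \bowtie \overline{F}$ is a symmetric forest with $2n$ nodes.
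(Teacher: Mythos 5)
Your treatment of conditions $(i)$, $(ii)$ and of the within-half pairs is essentially the paper's (it disposes of these "by definition of the shuffle" and by inheritance from $F$ and $\overline{F}$). The gap is in the mixed case, and it comes from identifying ``sub-descendant'' with ``attached beneath''. By the definition in the paper, $i$ is a sub-descendant of $j$ whenever $i$ lies after $j$ and before $j$'s first child in the BFS order; it is emphatically not a parent--child relation (in Figure \ref{fig:symForest}, $1$ is a sub-descendant of $-2$ while being its sibling). Consequently a seam node is typically a sub-descendant of several special leaves it is not attached to: in that same example (BFS order $-2,1,3,-3,-1,2$, seam edges attaching $-3$ under $-2$ and $-1$ under $1$), the node $-3$ is a sub-descendant of the special leaf $1$, even though it hangs under $-2$. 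Your assertions that a cross-half sub-descendant relation arises ``only when $-e_k$ is attached beneath $e_m$'' and that ``the only cross-half adjacencies of this kind occur exactly at the newly attached edges'' are therefore false, and your verification of condition $(iii)$ checks only the $s$ attached pairs, leaving pairs such as $\{-3,1\}$ unexamined.

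The proposition is still true, but for two reasons your argument does not supply. First, Definition \ref{definition:shufflesrootedforest} imposes the SDP not merely on the connected pairs but on \emph{every} pair $(u,v)$ with $u\in\{-e_n,\ldots,-e_{n-s+1}\}$ and $v\in\{e_{n-s},\ldots,e_n\}$, which is precisely where all the nontrivial cross relations live; this blanket requirement is what the paper invokes. Second, one must show that every other cross pair carries no sub-descendant relation at all, so that the SDP holds vacuously for it; this is the content of the paper's two auxiliary claims that $\{-e_{n-s},\ldots,-e_1\}$ always satisfies the SDP with $\{e_1,\ldots,e_n\}$, and that $\{-e_n,\ldots,-e_{n-s+1}\}$ always satisfies it with $\{e_1,\ldots,e_{n-s-1}\}$, independently of how the $s$ edges are placed. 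That step requires locating the non-seam nodes relative to the relevant ``first child'' positions and is absent from your proof; without it (and with the attachment-based reading of sub-descendance corrected), the mixed case is not established.
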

\begin{proof}
Conditions $(i)$ and $(ii)$ of Definition
\ref{definition:symrootedforest} are verified by definition of the shuffle.

Notice that for any connection of $s$ edges from
$\left\{-e_n, -e_{n-1},...,-e_{n-s+ 1} \right\}$ to
$\left\{e_{n-s}, e_{n-s+1},\right.$
$\left. ...,e_{n-1}, e_n \right\}$,
$\left\{-e_{n-s}, -e_{n-s-1},...,-e_1 \right\}$ always satisfies the
sub-descendant property with $\left\{e_1, e_2,...,e_n \right\}$, and
$\left\{-e_{n}, -e_{n-1},...,-e_{n-s+1} \right\}$ always satisfies the
sub-descendant property with $\left\{e_1, e_2,...,e_{n-s-1} \right\}$.
By Definition \ref{definition:shufflesrootedforest}
,$\left\{-e_n, -e_{n-1},...,-e_{n-s+ 1} \right\}$ and
$\left\{e_{n-s},...,e_{n-1}, e_n \right\}$ satisfy the sub-descendant
property. Therefore, for any forest $G$ in
$F \bowtie \overline{F}$, for every two nodes $e_i,e_j$, $e_i$ and
$e_j$ satisfy the sub-descendant property. Thus, $G$ is a symmetric
forest.
\end{proof}

\begin{figure}[h]
  \centering
  \includegraphics[width=450.810px] {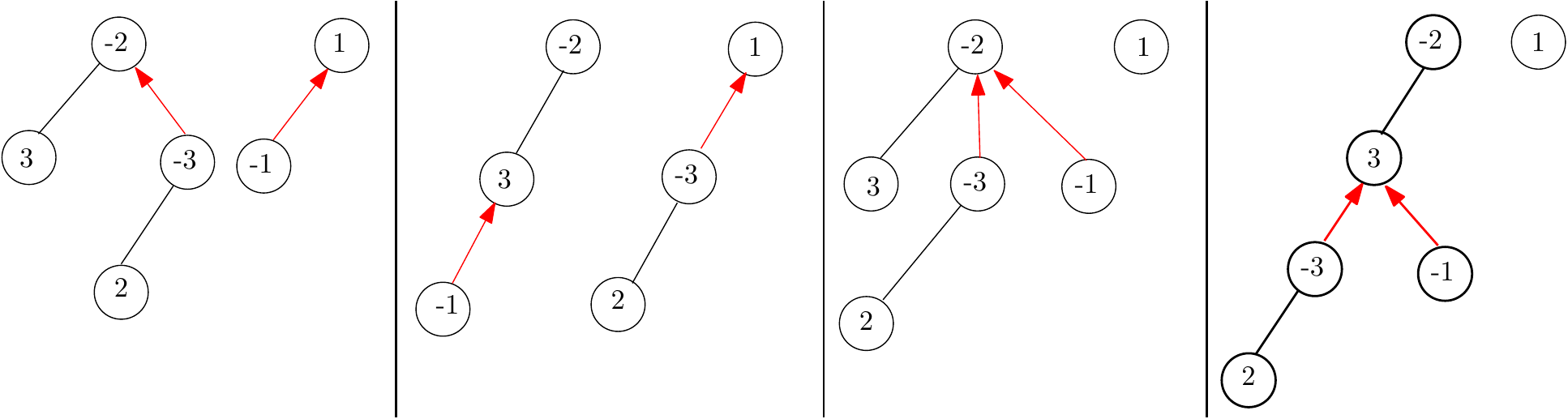}
  \caption{the set of shuffles between the forests F and $\overline{F}$ of Figure \ref{fig:symForest}.}
  \label{fig:symOrderedForest}
\end{figure}

\vspace{0.5cm}

\subsection{Bijection between symmetric annotated 1-sketches and
  symmetric forests}
We will show here that a symmetric annotated 1-sketch corresponds
bijectively to a symmetric forest. Moreover, the
decomposition of a symmetric annotated 1-sketch (see Proposition \ref{prop:sketchdecomp}) 
corresponds to the decomposition of a symmetric 
forest (see Proposition \ref{prop:forestdecomp}) .

\begin{proposition}
\label{proposition:bijsym2n}
  There is a one to one correspondence between symmetric annotated
  1-sketches of size $2n$ and symmetric forests of size
  $2n$.
\end{proposition}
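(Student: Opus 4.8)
The plan is to construct a bijection $\Phi\colon D^{(1)}(2n)\to F_S(2n)$ directly by reading the symmetric annotated 1-sketch as a set of instructions for building a forest in BFS order, and then to verify that $\Phi$ respects the parallel decompositions established in Proposition~\ref{prop:sketchdecomp} and Proposition~\ref{prop:forestdecomp}. The central idea exploits the fact that both objects are shuffles of an ``undecorated'' half with its symmetric partner; by Proposition~\ref{prop:sketchdecomp} every $\omega\in D^{(1)}(2n)$ decomposes uniquely as a shuffle of an annotated 1-sketch $\omega_1$ with $\overline{\omega}_1$, and by Proposition~\ref{prop:forestdecomp} every $G\in F_S(2n)$ decomposes as a shuffle of a forest $F$ with $\overline{F}$. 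So I would reduce the symmetric problem to a bijection between annotated 1-sketches of size $n$ and rooted labeled ordered forests of size $n$, and then check that this half-bijection is compatible with the two shuffle operations (Definition~\ref{def:shuffles} and Definition~\ref{definition:shufflesrootedforest}).

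First I would describe the half-bijection $\varphi\colon \{$annotated 1-sketches of size $n\}\to\{$rooted labeled ordered forests of size $n\}$. The guiding dictionary is that the subword of $\alpha^{(0)}_{\cdot}$ letters of $\omega_1$, read left to right, gives the BFS ordering $e_1,\ldots,e_n$ of the nodes, while the relative positions of the $\alpha^{(1)}_{\cdot}$ letters encode the parent--child structure. Condition $(ii)$ ($\alpha^{(0)}_i$ before $\alpha^{(1)}_i$) and condition $(iii)$ (order-preservation between the two layers) are exactly what is needed for the sub-descendant relation to be well defined: I expect that $i$ is a sub-descendant of $j$ in the forest precisely when the letter $\alpha^{(0)}_i$ falls between $\alpha^{(0)}_j$ and the $\alpha^{(1)}$-letter marking the end of $j$'s subtree. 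I would make this explicit by scanning $\omega_1$ and interpreting each $\alpha^{(0)}$ as ``open a new node'' and each $\alpha^{(1)}$ as ``close the current sibling block,'' which attaches the newly opened nodes as children. The parameter $s$ in $A_{n,s}$ (position of the rightmost $\alpha^{(0)}_{\cdot}$) should correspond to $2n-s$ being related to the number of special leaves, so I would aim to prove $\varphi(A_{n,s})=F_{n,\,2n-s}$ or the analogous indexing, since the special leaves are exactly the last nodes in BFS order with no children, matching the trailing $\alpha^{(1)}$ letters.

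Next I would verify that $\varphi$ is a bijection by exhibiting the inverse construction: given a rooted labeled ordered forest, read its nodes in BFS order to recover the $\alpha^{(0)}$ subword, and use the sub-descendant relation to interleave the $\alpha^{(1)}$ letters, checking that conditions $(ii)$ and $(iii)$ hold automatically. Then I would show the shuffle compatibility: that $\varphi$ (extended symmetrically via $\omega_2=\overline{\omega}_1$ on one side and $\overline{F}$ on the other) sends the set $\omega_1\bowtie\overline{\omega}_1$ bijectively onto $F\bowtie\overline{F}$. This is where the sub-descendant property in Definition~\ref{definition:shufflesrootedforest} must be matched term-by-term with condition $(iv)$ of Definition~\ref{def:symAnnotSk}; the shuffle of $\psi$ with $\overline{\psi}$ in Definition~\ref{def:shuffles} and the connection of the $s$ symmetric edges in Definition~\ref{definition:shufflesrootedforest} should be in evident correspondence once the special leaves are identified with the $\psi$-letters.

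The main obstacle will be rigorously establishing that the sub-descendant relation on the forest side is faithfully captured by the positional ``nesting'' of the $\alpha^{(1)}$ letters, and conversely that every valid interleaving satisfying $(ii)$ and $(iii)$ produces a genuine forest rather than an inconsistent ancestor structure. Because sub-descendant is a subtle relation (a node appearing after $j$ but strictly before any child of $j$), I will need to argue carefully that the BFS constraint forces the $\alpha^{(1)}$ blocks to be properly nested, so that ``open/close'' parsing never produces a malformed tree. Once this half-level faithfulness is secured, the symmetric case follows by combining it with Propositions~\ref{prop:sketchdecomp}, \ref{prop:forestdecomp}, \ref{proposition: shufflesym}, and \ref{proposition: shuffleforests}, so that $\Phi$ is built as $\varphi$ on the first half, $\overline{\varphi}$ on the symmetric half, and the shuffle bijection in between; the condition $(iv)$/SDP equivalence then guarantees that $\Phi$ lands in $F_S(2n)$ and is invertible.
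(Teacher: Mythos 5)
Your core construction is the same as the paper's: the paper's map $\phi$ reads the word left to right, creates a node for each $\alpha_i^{(0)}$, makes it the next right sibling of $j$ if the preceding letter is $\alpha_j^{(0)}$ and the leftmost child of $j$ if the preceding letter is $\alpha_j^{(1)}$ --- exactly your ``open a node / close a sibling block'' parsing. The difference is organizational: the paper applies this algorithm \emph{globally} to the whole word of length $4n$ and verifies conditions $(i)$--$(iii)$ and the SDP directly (the key observation being that $i$ is a sub-descendant of $j$ iff $\alpha_i^{(0)}$ lies between $\alpha_j^{(0)}$ and $\alpha_j^{(1)}$, which together with condition $(iv)$ gives the SDP), then writes down the inverse $\psi$ by a BFS traversal. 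You instead want to build the bijection on the size-$n$ halves and transport it across the shuffle operations; the paper does prove that half-level statement and the shuffle compatibility, but only \emph{afterwards} (Propositions \ref{prop:symbij} and \ref{prop:compatible}), as consequences of the global bijection rather than as ingredients for it. One small correction: $\alpha_j^{(1)}$ marks the point where the children of $j$ begin in the word, not ``the end of $j$'s subtree''; with your phrasing the relation you would recover is descendance, not sub-descendance.

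The genuine gap in your route is the surjectivity of the two decompositions. To conclude that the half-bijection plus shuffle compatibility yields a bijection $D^{(1)}(2n)\to F_S(2n)$, you need $D^{(1)}(2n)=\bigcup_{\omega_1}\omega_1\bowtie\overline{\omega}_1$ and $F_S(2n)=\bigcup_F F\bowtie\overline{F}$, i.e.\ that every symmetric object is obtained from its canonical first half by one of the \emph{specific} shuffles of Definition \ref{def:shuffles} (resp.\ Definition \ref{definition:shufflesrootedforest}). Propositions \ref{prop:sketchdecomp} and \ref{prop:forestdecomp}, which you cite for this, only assert that the second half is the symmetric of the first; they do not show that the interleaving has the prescribed nested form, and Proposition \ref{proposition: shufflesym} gives only the inclusion $\omega_1\bowtie\overline{\omega}_1\subset D^{(1)}(2n)$, not equality. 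Without that covering statement your composite map is a bijection only between the two unions of shuffle sets, and it remains to be shown that these unions exhaust $D^{(1)}(2n)$ and $F_S(2n)$. This is provable (conditions $(iii)$ and $(iv)$ do force the nested interleaving), but it is a real lemma you would have to supply; the paper's direct global definition of $\phi$ sidesteps it entirely.
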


\begin{proof}
   We now prove the proposition in 3 steps:
   \paragraph{Step 1:} we first present an algorithm to get the
   symmetric forest from a symmetric annotated
   1-sketch $\omega$ of $D^1{(2n)}$. We define the map $\phi$ between
   $D^1{(2n)}$ and $F_{S}{(2n)}$ by the following algorithm (see Figure \ref{fig:algorsymOrderedForest}):
   \begin{itemize}
   \item[(i)] Read $\omega$ from left to right.
   \item[(ii)] When $\alpha_{i}^{(0)}$ is read, create a node $i$ such
     that, if $\alpha_{i}^{(0)}$ is not the first letter, if the preceding letter is $\alpha_{j}^{(0)}$ then $i$
     becomes the next right sibling of $j$, and if the preceding letter is
     $\alpha_{j}^{(1)}$ then $i$ becomes the leftmost child of $j$.
   \end{itemize}
% Hình ảnh minh họa=============

\begin{figure}[h]
  \centering
  \includegraphics[width=400.650px] {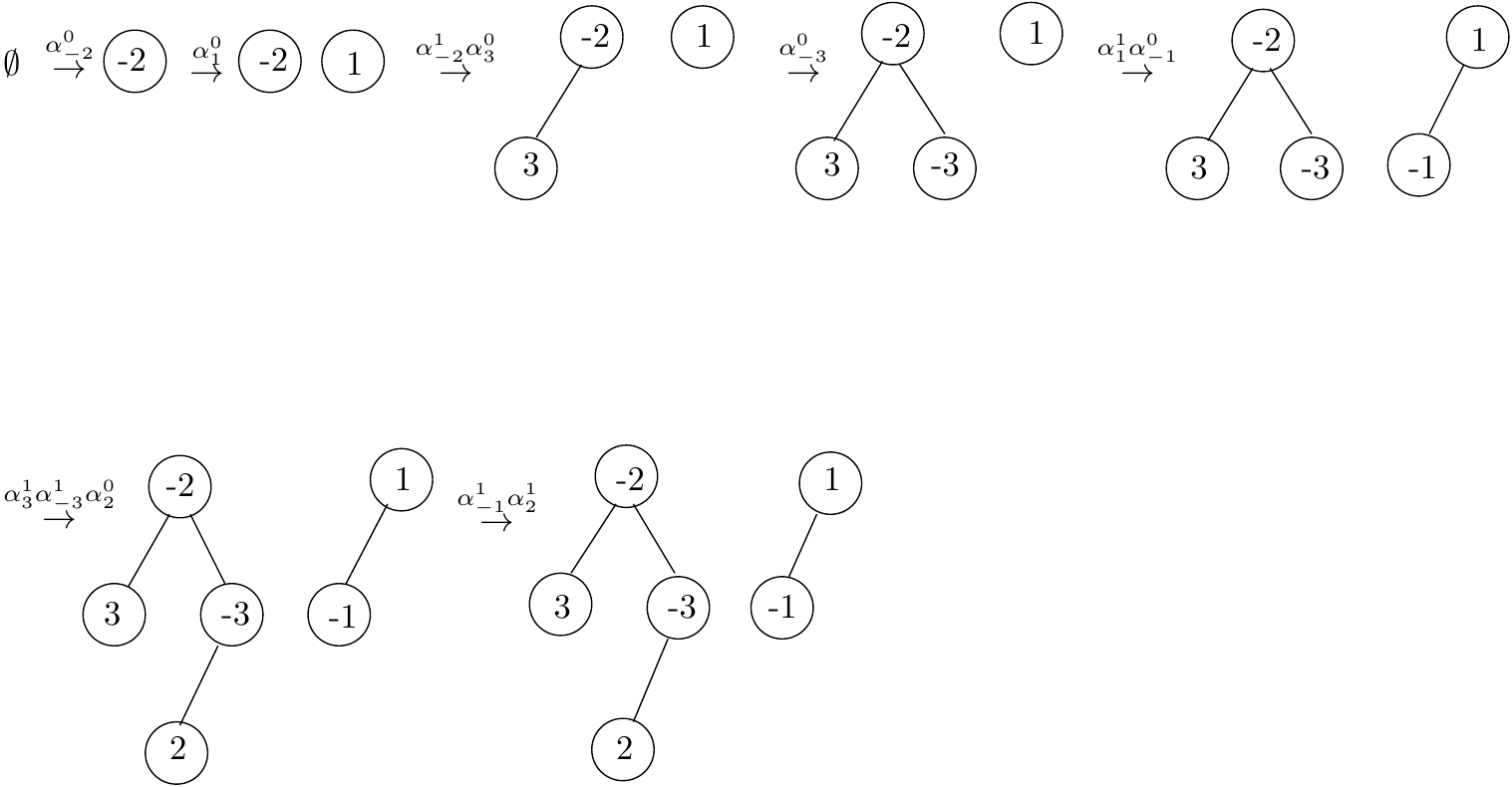}
  \caption{algorithm to get the symmetric forest $\phi\left(\scriptstyle
\alpha_{-2}^{(0)}
\alpha_{1}^{(0)}
\alpha_{-2}^{(1)}
\alpha_{3}^{(0)}
\alpha_{-3}^{(0)}
\alpha_{1}^{(1)}
\alpha_{-1}^{(0)}
\alpha_{3}^{(1)}
\alpha_{-3}^{(1)}
\alpha_{2}^{(0)}
\alpha_{-1}^{(1)}
\alpha_{2}^{(1)}\normalsize\right)$}
  \label{fig:algorsymOrderedForest}
\end{figure}

First note that $\alpha_{i}^{(0)}$ and $\alpha_{-i}^{(0)}$ cannot be both in the first $n$ $ \alpha_{.}^{(0)}$ letters. By definition, the forest  $\phi(\omega)$ has $n$ first nodes labeled by the first $n$ $ \alpha_{.}^{(0)}$-letters. And the last $n$ nodes are defined symmetrically as in the symmetric annotated 1-sketches $D^1{(2n)}$.

Second, remark that:
\begin{remark}
\label{rem:bijsk2forests}
\begin{enumerate}
\item  If $ \alpha_{i}^{(1)}$ is not followed by an $
  \alpha_{.}^{(0)}$-letter then node $i$ is a leaf.
\item If $\alpha_{i}^{(0)}$ appears before $ \alpha_{j}^{(0)}$ in
  $\omega$ then node $i$ appears before node $j$ in the BFS
  order of the nodes of the obtained forest.
\item The property  ``$ \alpha_{i}^{(0)}$ appears before $
  \alpha_{j}^{(0)}$ then $ \alpha_{-j}^{(0)}$ appears before $
  \alpha_{-i}^{(0)}$'', implies that ``$ i$ appears before $j$
  then $-j$ appears before $-i$ in the BFS order of the nodes of the
  obtained forest''.
\item \label{rem:SDP}The property ``$\alpha_{i}^{(0)}$ appears before
  $\alpha_{j}^{(1)}$ then $\alpha_{-j}^{(0)}$ appears before
  $\alpha_{-i}^{(1)}$'' is equivalent to
  ``$\alpha_{j}^{(0)}...\alpha_{i}^{(0)}...\alpha_{j}^{(1)}$ then
  $\alpha_{-i}^{(0)}...\alpha_{-j}^{(0)}...\alpha_{-i}^{(1)}$''. So,
  if $i$ is a sub-descendant of $j$ then $-j$ is a sub-descendant of
  $-i$.
\end{enumerate}
\end{remark}
From these last two remarks, it is clear that $\phi(\omega)$ is a symmetric forest. 

 \paragraph{Step 2:} before showing that  $\phi$ is a bijection, we
 describe the inverse mapping $\psi$. Let $G \in F_{S}(2n)$ and $e_1,
 e_2, ..., e_{2n}$ be the $2n$ nodes in $G$ read in BFS order. Let $\psi(G)$ be the word $\omega$ defined inductively as follow:
 \begin{itemize}

\item Read the vertices in BFS order. $\omega_1 = \alpha_{e_1}^{(0)}$.

%\item $\omega_1 = \alpha_{e_1}^{(0)}$.

\item For any $2 \leq j \leq 2n$, if $e_j$ is the next right sibling of $e_{j-1}$ then $\omega_j = \omega_{j-1}  \alpha_{e_j} ^{(0)}$, if $e_j$ is the leftmost child of
 $e_{i}$ then $\omega_j = \omega_{j-1}\alpha_{e_i} ^{(1)}\alpha_{e_j} ^{(0)}$.

\item $\omega = \omega_{2n}\alpha_{e_{2n-s+1}} ^{(1)}\alpha_{e_{2n-s+2}}^{(1)
  }... \alpha_{e_{2n}}^{(1)} = \omega_{2n}\alpha_{-e_s}
  ^{(1)}\alpha_{-e_{s-1}} ^{(1) }... \alpha_{-e_1}^{(1)}$ if $G$ has
  $s$ special leaves.% with $\alpha_{-e_{s+1}} ^{(1)}$  is the rightest $\alpha_{.} ^{(0)}$~-letter in $\omega_{2n}$.
\end{itemize}

 Note that for all $G \in F_{S}(2n)$, the word $\psi(G)$ satisfies
 the properties (i)-(iv) of symmetric annotated 1-sketches. Hence
 $\psi$ is a mapping from $F_{S}(2n)$ to 
 $D^1{(2n)}$.

\paragraph{Step 3:} it is easy to prove that $\psi(\phi(D^1{(2n)}))=
D^1{(2n)}$ and $\phi(\psi(F_{S}(2n)))= F_{S}(2n)$.

 %Hình ảnh minh họa=================================================
% \begin{figure}[h]
%   \centering
%   \includegraphics[width=330.9000px] {logo16.pdf}
%  \caption{The symmetric annotated 1-sketch $\omega = \alpha_{-2}^{(0)}
% \alpha_{1}^{(0)}
% \alpha_{-2}^{(1)}
% \alpha_{3}^{(0)}
% \alpha_{-3}^{(0)}
% \alpha_{1}^{(1)}
% \alpha_{-1}^{(0)}
% \alpha_{3}^{(1)}
% \alpha_{-3}^{(1)}
% \alpha_{2}^{(0)}
% \alpha_{-1}^{(1)}
% \alpha_{2}^{(1)}$ in $D^1{(6)}$, and the associated symmetric  labeled ordered forest $\phi( \omega) \in F_{S}(6).$}
%   \label{fig:assiatesym}
% \end{figure}
\end{proof}

From Propositions \ref{prop:bijArr2sk} and \ref{proposition:bijsym2n}, we get that:
\begin{corollary}
\label{cor:regions2symforests}
  $\Phi = \phi \circ\sigma$ is a bijection from the regions of the Catalan arrangement $\mathcal{C}_{\left\{-1,0,1 \right\}}(n)$ to the symmetric forests $F_S (n)$.
\end{corollary}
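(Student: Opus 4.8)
The plan is to establish Corollary \ref{cor:regions2symforests} simply by composing the two bijections already proved and verifying that the composition lands in the claimed target set. The statement asserts that $\Phi = \phi \circ \sigma$ is a bijection from the regions of $\mathcal{C}_{\left\{-1,0,1\right\}}(n)$ to the symmetric forests. Proposition \ref{prop:bijArr2sk} gives that $\sigma$ is a one-to-one correspondence between the regions and the symmetric annotated $1$-sketches $D^{(1)}(2n)$, and Proposition \ref{proposition:bijsym2n} gives that $\phi$ is a one-to-one correspondence between $D^{(1)}(2n)$ and the symmetric forests. Since a composition of two bijections is a bijection, and the codomain of $\sigma$ coincides exactly with the domain of $\phi$, the map $\Phi$ is a well-defined bijection onto the symmetric forests.

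The only genuine point requiring care is the discrepancy in notation for the target: the two propositions describe symmetric forests \emph{with $2n$ nodes} and use the symbol $F_S(2n)$ (see Definition \ref{definition:symrootedforest}), whereas the corollary writes $F_S(n)$. First I would clarify that these denote the same object, namely the set of symmetric forests associated to $\mathcal{C}_{\left\{-1,0,1\right\}}(n)$ in $\R^n$: the indexing reflects that a region in $\R^n$ corresponds via $\sigma$ to a symmetric annotated $1$-sketch of size $2n$ (built on the alphabet $\mathcal{A}_{2n}$ indexed by $\llbracket -n,n\rrbracket\setminus\{0\}$), which in turn corresponds via $\phi$ to a forest on $2n$ nodes. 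So the forest genuinely has $2n$ nodes even though it encodes a region in dimension $n$. I would simply remark that $F_S(n)$ in the corollary is shorthand for $F_S(2n)$, or equivalently standardize the notation, so that no logical gap remains.

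With that clarification in place, the proof is a one-line assembly. I would write: by Proposition \ref{prop:bijArr2sk}, $\sigma$ is a bijection from the regions of $\mathcal{C}_{\left\{-1,0,1\right\}}(n)$ onto $D^{(1)}(2n)$; by Proposition \ref{proposition:bijsym2n}, $\phi$ is a bijection from $D^{(1)}(2n)$ onto $F_S(2n)$. Hence $\Phi = \phi\circ\sigma$ is the composite of two bijections whose intermediate sets agree, and is therefore itself a bijection from the regions onto $F_S(2n)$.

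There is no real obstacle here; the substance of the argument lives entirely in the two preceding propositions. The main thing to be vigilant about is the bookkeeping of the size parameter, ensuring the reader is not tripped up by the $F_S(n)$ versus $F_S(2n)$ notation, and making explicit that the codomain of $\sigma$ is literally the domain of $\phi$ so that the composition is defined without any intermediate reindexing. I would therefore keep the written proof to a couple of sentences, with the notational reconciliation stated plainly.
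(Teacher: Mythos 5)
Your proposal is correct and matches the paper exactly: the corollary is stated there with no separate proof beyond the phrase ``From Propositions \ref{prop:bijArr2sk} and \ref{proposition:bijsym2n}, we get that,'' i.e.\ the composition of the two bijections, which is precisely your argument. Your observation that $F_S(n)$ in the corollary should read $F_S(2n)$ is a legitimate catch of a notational slip in the paper, not a gap in your reasoning.
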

% \begin{example}
% \label{example:bijR2sym}
  
%   Bijection between the type C Catalan arrangement of $\R^2$ and the
%   symmetric labeled  ordered forests of size $2$.
   %Hình ảnh minh họa=================================================

% \begin{figure}[h]
%   \centering
%   \includegraphics[width=450.800px] {logo17.pdf}
%   \caption{The Catalan arrangement $\mathcal{C}_{\left\{-1,0,1 \right\}}(2)$, and the symmetric 
%  forests corresponding to each region.}
%   \label{fig:Catsymm}
% \end{figure}
%\end{example}

\vspace{0.3cm}
% Remark that from Proposition \ref{proposition: shufflesym},
% Proposition \ref{proposition: shuffleforests} and  Proposition
% \ref{proposition:bijsym2n}, we get that to a shuffle between an  annotated
% 1-sketch of size $n$ and its symmetric corresponds bijectively a labeled ordered forests of size
% $n$ and the shuffle with its symmetric.

The bijection $\phi$ induces a bijection between the annotated
1-sketches of size $n$ with rightest $\alpha_.^{(0)}$-letter at
position $s$ and the rooted labeled ordered forests with $2n-s$ special leaves.

\begin{proposition}
\label{prop:symbij}
 The mapping $\phi$ induces a bijection between $A_{n,s}$ and $F_{n,2n-s},n \leq s \leq 2n-1$.
\end{proposition}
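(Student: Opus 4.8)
The plan is to exploit the fact that the algorithm defining $\phi$ (and the inverse algorithm $\psi$) reads the word from left to right and only ever consults the pattern of $\alpha_{.}^{(0)}$ and $\alpha_{.}^{(1)}$ letters: it never invokes condition $(iv)$ of Definition \ref{def:symAnnotSk} nor the symmetry clauses $(i)$--$(ii)$ of Definition \ref{definition:symrootedforest}. Consequently the same algorithm applies verbatim to an arbitrary annotated $1$-sketch $\omega_1$ of size $n$ (a word on $2n$ letters satisfying only $(ii)$ and $(iii)$), producing a rooted labeled ordered forest $\phi(\omega_1)$ on $n$ nodes, and dually $\psi$ turns any size-$n$ forest into such a word. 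First I would observe that the computation of Step~3 in the proof of Proposition \ref{proposition:bijsym2n}, repeated with the symmetry clauses simply deleted, still yields $\psi\circ\phi=\mathrm{id}$ and $\phi\circ\psi=\mathrm{id}$ on these half-objects; hence $\phi$ is already a bijection between annotated $1$-sketches of size $n$ and rooted labeled ordered forests of size $n$, and it will only remain to match the refinements indexed by $s$.

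The key computation is that, for $\omega_1\in A_{n,s}$, the forest $\phi(\omega_1)$ has exactly $2n-s$ special leaves. I would list the $\alpha_{.}^{(0)}$ letters of $\omega_1$ in order of appearance as $\alpha_{j_1}^{(0)},\ldots,\alpha_{j_n}^{(0)}$; by Remark \ref{rem:bijsk2forests}(2) this is precisely the BFS order $j_1,\ldots,j_n$ of the nodes, and by $(iii)$ the $\alpha_{.}^{(1)}$ letters occur in the same order $\alpha_{j_1}^{(1)},\ldots,\alpha_{j_n}^{(1)}$. By construction node $j_k$ is internal if and only if $\alpha_{j_k}^{(1)}$ is immediately followed by an $\alpha_{.}^{(0)}$ letter (Remark \ref{rem:bijsk2forests}(1)). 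Since the rightmost $\alpha_{.}^{(0)}$ sits at position $s$, positions $s+1,\ldots,2n$ carry exactly the last $2n-s$ of the $\alpha_{.}^{(1)}$ letters, namely $\alpha_{j_{s-n+1}}^{(1)},\ldots,\alpha_{j_n}^{(1)}$; none of these is followed by an $\alpha_{.}^{(0)}$, so $j_{s-n+1},\ldots,j_n$ are leaves. Conversely, for $s>n$ the letter $\alpha_{j_{s-n}}^{(1)}$ occupies some position $p\le s-1$, and since the next $\alpha_{.}^{(1)}$ letter appears only at position $s+1$, every position from $p+1$ to $s$ carries an $\alpha_{.}^{(0)}$ letter; in particular the immediate successor of $\alpha_{j_{s-n}}^{(1)}$ is an $\alpha_{.}^{(0)}$ letter, so $j_{s-n}$ is internal. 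Thus $j_{s-n}$ is the last internal node in BFS order and $j_{s-n+1},\ldots,j_n$ are exactly the special leaves, giving $2n-s$ of them; the boundary case $s=n$ (where $\phi(\omega_1)$ consists solely of leaves, its fictitious parent playing the role of $j_0$) yields all $n=2n-s$ leaves as special. This establishes $\phi(A_{n,s})\subseteq F_{n,2n-s}$.

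Finally I would conclude by a partition argument rather than re-deriving an inverse count. As $s$ runs over $\{n,\ldots,2n-1\}$ the sets $A_{n,s}$ partition the annotated $1$-sketches of size $n$ (every such word starts with an $\alpha_{.}^{(0)}$ and ends with an $\alpha_{.}^{(1)}$, so its rightmost $\alpha_{.}^{(0)}$ lies at a position $s$ with $n\le s\le 2n-1$), and as $r=2n-s$ runs over $\{1,\ldots,n\}$ the sets $F_{n,r}$ partition the forests of size $n$. Since $\phi$ is a global bijection between these two finite sets (first paragraph) sending each block $A_{n,s}$ into the single matching block $F_{n,2n-s}$ (second paragraph), and the index map $s\mapsto 2n-s$ is itself a bijection of index sets, a cardinality comparison forces $\phi(A_{n,s})=F_{n,2n-s}$ for every $s$; a global bijection mapping each block of a source partition into a distinct block of a target partition necessarily restricts to a bijection on every block. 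Hence $\phi$ induces a bijection $A_{n,s}\to F_{n,2n-s}$ for each $n\le s\le 2n-1$.

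The main obstacle is the second paragraph: pinning down that the last internal node in BFS order is exactly $j_{s-n}$. The delicate part is the two-sided position bookkeeping — showing that the block of $\alpha_{.}^{(1)}$ letters lying after position $s$ consists of precisely the $2n-s$ highest-BFS nodes (forcing them to be leaves), while the $\alpha_{.}^{(1)}$ letter just before that block is forced to be immediately succeeded by an $\alpha_{.}^{(0)}$ letter (forcing the preceding node to be internal) — together with the separate treatment of the all-leaves boundary case $s=n$ via the fictitious-parent convention. Everything else is bookkeeping once the global bijection of the first paragraph is in hand.
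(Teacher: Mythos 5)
Your proof is correct and follows essentially the same route as the paper: both hinge on reading off the special leaves of $\phi(\omega_1)$ from the position $s$ of the rightmost $\alpha_.^{(0)}$-letter (your second paragraph supplies the bookkeeping the paper leaves implicit). The only divergence is at the end, where the paper verifies directly that $\psi$ sends $F_{n,2n-s}$ back into $A_{n,s}$, while you instead deduce surjectivity from the fact that the blocks $A_{n,s}$ and $F_{n,2n-s}$ partition the two sides of a global bijection --- an equally valid conclusion.
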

\begin{proof}
 Let $\omega_1 \in A_{n,s}$. It means that $\omega_1= \omega_0
 \alpha_{j_n}^{(0)} \alpha_{j_{s-n+1}}^{(1)}
 \alpha_{j_{s-n+2}}^{(1)}... \alpha_{j_{n-1}}^{(1)}
 \alpha_{j_n}^{(1)}$. 

From the first step of the proof of Proposition
\ref{proposition:bijsym2n}, we have that:
\begin{itemize}
\item $\alpha_{j_1}^{(0)}\ldots \alpha_{j_n}^{(0)}$ represent the
  nodes $j_1,\ldots , j_n$ read in BFS order in $\phi(\omega_1)$,
\item If $ \alpha_{i}^{(1)}$ is not followed by an $
  \alpha_{.}^{(0)}$-letter then node $i$ is a leaf.
\item The last $ \alpha_{.}^{(1)}$-letter followed by a
  $\alpha_{.}^{(0)}$-letter is $\alpha_{j_{s-n}}^{(1)}$. This implies
  that the last internal node in the BFS order of the nodes of $\phi(\omega_1)$ is $j_{s-n}$.
\end{itemize}
 Thus, $\phi(\omega_1)$ is
 a rooted ordered forest with $n$ labeled nodes $j_1, ..., j_n$ where
 the nodes  $j_{s-n+1}, j_{s-n+2}, ..., j_{n} $ are $2n-s$ special
 leaves and $\phi(\omega_1) \in F_{n,2n-s}$. Conversely, let $ F \in
 F_{n,2n-s}$ with $2n-s$ special leaves $j_{s-n+1}, j_{s-n+2}, ...,
 j_{n} $. Then $\alpha_ {j_{n}} ^{(0)}$ is at the $s^{th}$ position in $\psi(F)$,  hence it belongs to 
$A_{n,s}$. 

It is easy to prove that $\psi(\phi(A_{n,s}))= A_{n,s}$. Similarly, $\phi(\psi(F_{n,2n-s}))= F_{n,2n-s} $.
\end{proof}

We now show that the the different possible
shuffles between an annotated 1-sketch and its symmetric correspond by
$\phi$ to the different possible
shuffles between a rooted labeled ordered forest and its symmetric.

\begin{proposition}
\label{prop:compatible}
The bijections $\phi$ and $\psi$ are compatible with shuffles and
symmetrics. Indeed, let $\omega_1$ be annotated 1-sketch, then
$\phi( \overline{\omega}_1)= \overline{\phi(\omega_1)}$ and $\phi(
\omega_1\bowtie\overline{\omega}_1)=\phi(\omega_1) \bowtie
\overline{\phi(\omega_1)}$.
\end{proposition}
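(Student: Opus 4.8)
The plan is to prove the two identities $\phi(\overline{\omega}_1) = \overline{\phi(\omega_1)}$ and $\phi(\omega_1 \bowtie \overline{\omega}_1) = \phi(\omega_1) \bowtie \overline{\phi(\omega_1)}$ separately, relying throughout on the explicit construction of $\phi$ given in Step 1 of the proof of Proposition \ref{proposition:bijsym2n} and on the definitions of the symmetric operation on words (Definition \ref{definition:symofrootedforest}) and on forests.

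First I would establish the symmetry identity $\phi(\overline{\omega}_1) = \overline{\phi(\omega_1)}$. The key observation is that $\phi$ builds a forest purely from the relative order of the $\alpha_.^{(0)}$ and $\alpha_.^{(1)}$ letters: a node $j$ becomes a right sibling or a leftmost child according to whether the letter preceding $\alpha_j^{(0)}$ is an $\alpha_.^{(0)}$ or an $\alpha_.^{(1)}$ letter. By Remark \ref{rem:bijsk2forests}(\ref{rem:SDP}), the statement ``$i$ is a sub-descendant of $j$'' in $\phi(\omega_1)$ is equivalent to the pattern $\alpha_j^{(0)}\ldots\alpha_i^{(0)}\ldots\alpha_j^{(1)}$ appearing in $\omega_1$. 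Now, taking the symmetric $\overline{\omega}_1$ reverses the word and sends $\alpha_k^{(s)}$ to $\alpha_{-k}^{(1-s)}$, which interchanges the roles of the $(0)$ and $(1)$ superscripts and of the endpoints of each such pattern. I would verify that this transformation on words is exactly matched by the sub-descendant-reversing condition that defines $\overline{F}$ in Definition \ref{definition:symofrootedforest}, namely that $-e_i$ is a sub-descendant of $-e_j$ in $\overline{F}$ iff $e_j$ is a sub-descendant of $e_i$ in $F$. Since a forest is completely determined by its sub-descendant relation together with the BFS order of its labels (which $\phi$ and the symmetric operation both track consistently), the two forests coincide.

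For the shuffle identity, I would argue that $\phi$ commutes with the two shuffle operations by tracking, letter by letter, how each of the three cases in Definition \ref{def:shuffles} acts on the word and showing it produces exactly one of the edge-connections allowed in Definition \ref{definition:shufflesrootedforest}. Concretely, writing $\omega_1 = \omega_0 \alpha_{j_n}^{(0)}\psi$ with $\psi = \alpha_{j_{s-n+1}}^{(1)}\ldots\alpha_{j_n}^{(1)}$, Proposition \ref{prop:symbij} already identifies $\phi(\omega_1)$ as a forest in $F_{n,2n-s}$ whose $2n-s$ special leaves are $j_{s-n+1},\ldots,j_n$. Inserting letters from $\overline{\psi}$ into the $\psi$-block of $\omega_1$, as prescribed by the shuffle, corresponds under $\phi$ to attaching nodes from $\{-e_n,\ldots,-e_{n-s+1}\}$ as children of the special leaves in $\{e_{n-s},\ldots,e_n\}$; the sub-descendant patterns created by each insertion match precisely the edge-connections that the forest shuffle is allowed to make. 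The three cases of Definition \ref{def:shuffles} then biject with the possible placements of these edges. Because $\phi$ is a bijection (Proposition \ref{proposition:bijsym2n}) and both shuffle sets have been shown to have the same underlying structure, the images of the two sides coincide as sets.

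The main obstacle I anticipate is the second identity: I must show not merely that $\phi$ sends each shuffle of words to \emph{a} shuffle of forests, but that the correspondence is onto, so that $\phi(\omega_1 \bowtie \overline{\omega}_1)$ exhausts $\phi(\omega_1) \bowtie \overline{\phi(\omega_1)}$. This requires checking that the recursive case analysis of Definition \ref{def:shuffles}, which interleaves $\alpha_{-j_k}^{(0)}$ letters among the $\alpha_.^{(1)}$ letters of $\psi$, is in exact bijection with the choice of which special leaves receive the $s$ new edges in Definition \ref{definition:shufflesrootedforest}, and that the sub-descendant property is preserved in lockstep. Since both sides have already been shown to consist of symmetric $1$-sketches (Proposition \ref{proposition: shufflesym}) and symmetric forests (Proposition \ref{proposition: shuffleforests}) respectively, and $\phi$ is a bijection between those sets, the cleanest route is to verify compatibility on the level of the single distinguished edge/letter at each recursive step and then invoke induction on $s$, rather than attempting a direct count.
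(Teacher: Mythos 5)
Your proposal is correct and follows essentially the same route as the paper: both identities are obtained from the pattern characterization of the sub-descendant relation (Remark \ref{rem:bijsk2forests}) applied to $\omega_1$ versus $\overline{\omega}_1$, and from matching the recursive word shuffle of Definition \ref{def:shuffles} with the edge-connections of Definition \ref{definition:shufflesrootedforest} on the special leaves identified by Proposition \ref{prop:symbij}. You are merely more explicit than the paper about verifying that the correspondence between the two shuffle sets is onto, a point the paper dispatches as straightforward.
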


\begin{proof}
 Let $\omega_1= \omega_0 \alpha_{j_n}^{(0)} \alpha_{j_{s-n+1}}^{(1)}
 \alpha_{j_{s-n+2}}^{(1)} \ldots \alpha_{j_{n-1}}^{(1)}
 \alpha_{j_n}^{(1)} \in A_{n,s}$.

From Proposition \ref{prop:symbij}, we get $\phi(
\omega_1) \in F_{n,2n-s}$ and $\phi(
\overline{\omega}_1) \in F_{n,2n-s}$.

Remark that if $\omega_1$ is of the form $\ldots \alpha_i^{(0)}\ldots
\alpha_j^{(0)} \ldots \alpha_i^{(1)} \ldots \alpha_j^{(1)} \ldots$,
then $\overline{\omega}_1$ is of the form $\ldots \alpha_{-j}^{(0)}\ldots
\alpha_{-i}^{(0)} \ldots \alpha_{-j}^{(1)} \ldots \alpha_{-i}^{(1)}
\ldots$. It means that in $\phi(\omega_1)$, $j$ is a sub-descendant of
$i$ and in $\phi(\overline{\omega}_1)$, $-i$ is a sub-descendant of
$-j$. Thus $\phi( \overline{\omega}_1)= \overline{\phi(\omega_1)}$.

Moreover
a shuffle between
$\alpha_{j_{s-n+1}}^{(1)} \alpha_{j_{s-n+2}}^{(1)} \ldots\alpha_{j_n}^{(1)}$
and  $\alpha_{-j_n}^{(0)} \ldots
\alpha_{-j_{s-n+2}}^{(0)}\alpha_{-j_{s-n+1}}^{(0)}$
corresponds by $\phi$ to a shuffle between the special leaves of
$\phi(\omega_1)$, $j_{s-n+1}, j_{s-n+2} \ldots, j_{n}$, and the nodes
of $\phi(\overline{\omega}_1)$,
$-j_{n}, \ldots, -j_{s-n+2} , -j_{s-n+1}$.

Since $\omega_1\bowtie \overline{\omega}_1 = \omega_0 \alpha_{j_n}^{(0)} \left[\alpha_{j_{s-n+1}}^{(1)} \alpha_{j_{s-n+2}}^{(1)} \ldots\alpha_{j_n}^{(1)}
\bowtie \alpha_{-j_n}^{(0)} \ldots
\alpha_{-j_{s-n+2}}^{(0)}\alpha_{-j_{s-n+1}}^{(0)} \right ] \alpha_{-j_n}^{(1)} 
\overline{\omega}_0 $, it is straightforward to conclude that 
$\phi(
\omega_1\bowtie\overline{\omega}_1)=\phi(\omega_1) \bowtie
\overline{\phi(\omega_1)}$.
\end{proof}

\section{The number of regions of the type C Catalan arrangement}
\label{sec:enumeration}
We are now able to compute the number of regions of the type $C$ Catalan
arrangement. We first compute the number of rooted ordered
forests of size $n$ with $s$ special leaves.

\begin{proposition}
\label{prop:numla}
   The number of rooted ordered forests of size $n$ with $s$ special
   leaves, $C_{n,s}$, verifies the following formula :
    $ C_{n,s} = \frac{s \binom{2n-s}{n}}{2n-s}$ for $ 1 \leq s \leq
    n.$
    % $ C_{n,s} = \frac{s \binom{2n-s-1}{n}}{n-s}$ for $ 1 \leq s \leq n-1$ and 
    % $ C_{n,n} = 1.$

\end{proposition}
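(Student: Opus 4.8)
prove that the number of rooted labeled ordered forests of size $n$ with $s$ special leaves is
$$C_{n,s} = \frac{s\binom{2n-s}{n}}{2n-s}, \quad 1 \le s \le n.$$

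Let me think about this carefully.The plan is to recognize $C_{n,s}$ as a ballot (shifted Catalan) number and to prove the formula by translating forests into lattice paths and applying the reflection principle. Concretely, I would first reduce the count of forests to a count of Dyck paths, and then evaluate that count.

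For the reduction I would use the bijection $\phi$ of Proposition~\ref{prop:symbij} between $A_{n,s}$ and $F_{n,2n-s}$. Replacing in the sketch $\phi^{-1}(F)$ each $\alpha^{(0)}$-letter by an up-step and each $\alpha^{(1)}$-letter by a down-step produces a word of $n$ up-steps and $n$ down-steps; conditions (ii) and (iii) of Definition~\ref{def:symAnnotSk} state precisely that every prefix carries at least as many up- as down-steps, so this word is a Dyck path of semilength $n$. It depends only on the shape of $F$, the signed labels contributing the same factor $2^n n!$ on both sides of $\phi$ and hence being irrelevant to $C_{n,s}$. By Proposition~\ref{prop:symbij}, for a forest with $s$ special leaves the rightmost $\alpha^{(0)}$-letter occupies position $2n-s$, so the last up-step of the Dyck path is at position $2n-s$; equivalently the path ends in exactly $s$ trailing down-steps. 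Thus $C_{n,s}$ equals the number of Dyck paths of semilength $n$ with exactly $s$ trailing down-steps.

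To evaluate this number I would decompose such a path: it consists of a nonnegative path from height $0$ to height $s-1$ using $2n-s-1$ steps, followed by one up-step (the last one) and then $s$ down-steps back to $0$. The number of initial nonnegative paths is a ballot number, counted by the reflection principle as
\[
\binom{2n-s-1}{n-1}-\binom{2n-s-1}{n},
\]
since among the $\binom{2n-s-1}{n-1}$ unconstrained paths of length $2n-s-1$ ending at height $s-1$, those dipping below $0$ are in bijection (reflect at level $-1$) with the paths counted by $\binom{2n-s-1}{n}$. A routine simplification of this difference yields $\frac{s}{2n-s}\binom{2n-s}{n}$, which is the claimed value of $C_{n,s}$; the boundary cases $s=1$ and $s=n$ (where the path is forced to be $U^{n}D^{n}$) can be checked directly.

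The routine steps are the reflection count and the binomial simplification. The part demanding care is the reduction: I must pin down precisely that ``exactly $s$ special leaves'' corresponds to ``exactly $s$ trailing down-steps,'' which hinges on locating the last internal node in the BFS order and on the convention that an all-leaf forest has a fictitious parent (the case $s=n$). Once that correspondence is established cleanly, the enumeration is immediate.
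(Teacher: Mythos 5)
Your proposal is correct. The first half --- passing from forests to annotated $1$-sketches via $\psi$, reading $\alpha^{(0)}$ as an up-step and $\alpha^{(1)}$ as a down-step, and identifying ``$s$ special leaves'' with ``Dyck path of semilength $n$ ending in exactly $s$ down-steps'' --- is exactly the reduction the paper performs, so the two arguments agree up to that point. Where you diverge is in counting these Dyck paths. The paper applies the cycle lemma: it considers all $\binom{2n-s}{n}$ lattice paths from $(0,0)$ to $(2n-s,s)$, lets $\Z_{2n-s}$ act by cyclic rotation, observes that each orbit contains exactly $s$ strictly dominating representatives, and then transports these (via $UUv\mapsto UvU$ and appending $s$ down-steps) to the desired Dyck paths; the factor $\frac{s}{2n-s}$ thus appears directly as an orbit-counting ratio. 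You instead peel off the forced final block $UD^s$ and count the remaining nonnegative prefix from height $0$ to height $s-1$ by the reflection principle, getting $\binom{2n-s-1}{n-1}-\binom{2n-s-1}{n}$, which indeed simplifies to $\frac{s}{2n-s}\binom{2n-s}{n}$ (I checked the reflection bookkeeping: paths touching $-1$ correspond to paths from $-2$ to $s-1$, i.e.\ $n$ up-steps, giving $\binom{2n-s-1}{n}$). Your route is arguably more elementary, avoiding both the cycle lemma and the rotation bijection, at the cost of a binomial simplification; the paper's route makes the shape of the answer transparent without any algebra. One small imprecision on your side: the Dyck (nonnegativity) property of the path follows from condition (ii) of Definition \ref{def:symAnnotSk} alone (each $\alpha_i^{(0)}$ precedes its matching $\alpha_i^{(1)}$); condition (iii) is not what guarantees it.
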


\begin{proof}
  Every rooted ordered forest $F$ can be identified with an annotated
  1-sketch $\psi(F)$ by Proposition \ref{prop:symbij}.  Let $F$ be a rooted ordered
  forest of size $n$ with $s$ special leaves
  $e_{n-s+1},..., e_{n-1}, e_n$, then let 
  $\psi(F) =\omega_0 \alpha_{e_n}^{(0)}\alpha_{e_{n-s+1}}^{(1)}
  \alpha_{e_{n-s+2}}^{(1)}... \alpha_{e_{n-1}}^{(1)}
  \alpha_{e_n}^{(1)}$. We associate an up step $U$ to each
  $\alpha_{.}^{(0)}$ letter and a down step $D$ to each
  $\alpha_{.}^{(1)}$ letter and thus obviously obtain a Dyck path of
  size $n$.
  This implies that $C_{n,s}$ is equal to the number of
  Dyck paths of size $n$ that have forms $P = P_0 UDD...D$, here each
  $P_0U$ is a lattice path with $n$ up steps and $n-s$
  down steps. Consider the family $L$ of all lattice paths from
  $(0,0)$ to $(2n-s,s)$ consisting of $n$ $U$ and $n-s$ $D$. $L$
  is enumerated by $\binom{2n-s}{n}$.

  Now consider the action of the cyclic group $\Z_{2n-s}$ on $L$ by
  cyclic rotation. Pick a lattice path on $L$, by cycle lemma, there
  exist exactly $s = n-(n-s)$ cyclic rotations that are
  $1$-dominating (any prefix has strictly more $U$ than $D$). Therefore, each orbit contains exactly $s$ lattice
  paths such that any prefix has strictly more $U$ than $D$. Thus
  these $s$ lattice paths are of the forms $UUv$ with $n$ $U$ and
  $n-s$ $D$, and they end at height $s$. Let $L_0$ be the set of all
  lattice paths on $L$ such that any prefix has strictly more $U$ than
  $D$, so the number of elements of $L_0$ is
  $\frac{s \binom{2n-s}{n}}{2n-s}$.

  Given the following bijective transformation for any lattice
  path $P_1$ on $L_0$ by changing $P_1 = UUv$ to $P'_1= UvU = P_0 U,$
  we get that
  $P_0U$ is a lattice path with $n$ up steps and $n-s$ down steps and
  always above the $x$-axis. Now add $s$ down steps at the end of
  $P_0U$, then we get a Dyck path that has form $P = P_0 UDD...D$.

  Therefore, we conclude the formula of $C_{n,s}$ above.
  
%    Let $F$ be a rooted ordered forest of size $n$ with $s$ special leaves
%    $e_{n-s+1},..., e_{n-1}, e_n$. If we cancel the last special leaf
%    $e_n$ then we get a rooted ordered forest of size $n-1$ with $t$ special
%    leaves, $ s-1 \le t \le n-1$. Hence,
% $C_{n,s} = C_{n-1,s-1}+C_{n-1,s}+ \ldots+C_{n-1,n-1},1 \leq s \leq n-1.$
% %$$C_{n,s} = C_{n-1,s-1}+C_{n-1,s}+ ....+ C_{n-1,n-2}+C_{n-1,n-1},1 \leq s \leq n-1.$$
% Moreover $C_{n,n}=1$ since a rooted ordered forest of size $n$ with $n$
% special leaves is the forest with only leaves.

% Therefore
%  $C_{n,s} = C_{n,s+1}+ C_{n-1,s-1},1 \leq s \leq n-1 \qquad
%  \mbox{and}\qquad C_{n,n}=1.$

% The number of rooted ordered forests with $s$ special leaves in the case
% $n=2,3$ are given by
%  $C_{2,1} = C_{2,2} =1,  C_{3,3}= 1, C_{3,1} = C_{3,2} =2.$

% Thus, by induction, we conclude.
\end{proof}

% Hình ảnh minh họa==============

 % \begin{figure}[h]
%   \centering
%   \includegraphics[width=200.400px] {logo18.pdf}
%   \caption{The number of rooted labeled ordered forests of size n with s special leaves for $n=2,...,6.$.}
%   \label{fig:numberlabeled}
% \end{figure}

Now we are able to enumerate the regions of the type $C$ Catalan arrangement.
\begin{proposition}
  The number of regions of the type $C$ Catalan arrangement is
  $$ r(C_{\left\{-1,0,1\right\}}(n)) = {2^n}n! \binom{2n}{n} $$
% $$ r(C_{\left\{-1,0,1\right\}}(n)) = {2^n}n! \sum\limits_{s=1}^{n-1} \frac{s2^s \binom{2n-s-1}{n}}{n-s} + 4^n n! .$$
\end{proposition}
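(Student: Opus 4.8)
The plan is to assemble the final count by combining the bijective correspondences established in the paper with the counting formula of Proposition \ref{prop:numla}. By Corollary \ref{cor:regions2symforests}, the regions of $\mathcal{C}_{\left\{-1,0,1\right\}}(n)$ are in bijection with the symmetric forests $F_S(2n)$, so it suffices to count the symmetric forests. By Proposition \ref{prop:forestdecomp} together with Proposition \ref{proposition: shuffleforests}, every symmetric forest arises as a shuffle in $F\bowtie\overline{F}$ for a unique underlying forest $F$ of size $n$, and conversely every such shuffle is a symmetric forest. Hence the plan is to count, for each $F$, the number of distinct shuffles in $F\bowtie\overline{F}$, and then sum over all $F$.

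First I would determine the number of shuffles produced by a single forest $F$ with $s$ special leaves. By Definition \ref{definition:shufflesrootedforest}, a shuffle is obtained by connecting the $s$ edges from $\{-e_n,\ldots,-e_{n-s+1}\}$ to targets among $\{e_{n-s},\ldots,e_n\}$, subject to the sub-descendant property and the prescribed BFS order. I would argue, via the shuffle description of Definition \ref{def:shuffles} transported through $\phi$ by Proposition \ref{prop:compatible}, that the number of admissible shuffles depends only on $s$; inspection of the three cases in Definition \ref{def:shuffles} (and the worked example with $|\psi|=k$ giving $4$ words for $k=2$) shows that a sketch whose symmetric part has $s$ letters admits exactly $\binom{2s}{s}/(s+1)\cdot(\text{something})$—more cleanly, I expect the count of shuffles for fixed $s$ to be a Catalan-type number, and I would pin down the exact value by a direct recursive count matching Definition \ref{def:shuffles}.

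Next I would sum over forests. Writing $C_{n,s}$ for the number of size-$n$ forests with $s$ special leaves, Proposition \ref{prop:numla} gives $C_{n,s}=\frac{s}{2n-s}\binom{2n-s}{n}$. The total number of regions is then
\[
r(\mathcal{C}_{\left\{-1,0,1\right\}}(n))=\sum_{s=1}^{n} C_{n,s}\cdot N(s),
\]
where $N(s)$ is the number of shuffles for a forest with $s$ special leaves, determined in the previous step. I would then evaluate this sum in closed form and verify it equals $2^n n!\binom{2n}{n}$. The summation identity is where I would focus the computation: after substituting the explicit expressions for $C_{n,s}$ and $N(s)$, the sum should telescope or collapse via a standard binomial/Vandermonde-type identity, and I would carry out that simplification carefully.

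The main obstacle I anticipate is correctly counting $N(s)$, the number of admissible shuffles for a fixed $s$, and ensuring it is independent of the detailed structure of $F$ (depending only on the number of special leaves). The subtlety is that the sub-descendant constraints in Definition \ref{definition:shufflesrootedforest} must not reduce the count below the raw shuffle count of Definition \ref{def:shuffles}; Proposition \ref{prop:compatible} is the key tool here, since it guarantees that $\phi(\omega_1\bowtie\overline{\omega}_1)=\phi(\omega_1)\bowtie\overline{\phi(\omega_1)}$, so counting shuffles on the sketch side (where the combinatorics of Definition \ref{def:shuffles} is explicit) transfers exactly to the forest side. Once $N(s)$ is identified, the remaining binomial summation is routine, and I would expect the factor $2^n n!$ to emerge naturally from the symmetric shuffling of signed labels while $\binom{2n}{n}$ comes from the underlying Catalan/ballot structure.
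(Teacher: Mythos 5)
Your overall architecture matches the paper's: pass to symmetric forests via Corollary \ref{cor:regions2symforests}, decompose each symmetric forest uniquely as a shuffle $F\bowtie\overline{F}$ (Propositions \ref{prop:forestdecomp} and \ref{proposition: shuffleforests}), count the underlying forests by $C_{n,s}$, multiply by the number of shuffles, and sum. But the proof has a genuine gap exactly where you flag your own uncertainty: you never determine $N(s)$, and the direction you gesture at --- ``a Catalan-type number,'' something like $\binom{2s}{s}/(s+1)$ times a correction --- is wrong. The correct value is $N(s)=2^s$. You can read this off the recursion implicit in Definition \ref{def:shuffles}: if $\psi$ has $k$ letters, the three cases give $N(k)=N(k-1)+\sum_{i=1}^{k-1}N(k-1-i)+1$ with $N(0)=1$, whence $N(1)=2$, $N(2)=4$, $N(3)=8$, i.e.\ $N(k)=2^k$. (The paper sees this more directly: transporting to Dyck paths, a shuffle interleaves $s$ up steps with $s$ down steps in the pattern $a_1\cdots a_s b_s\cdots b_1$ with each pair $(a_i,b_i)\in\{(U,D),(D,U)\}$, giving $2^s$.) Note that the worked example you cite, four words for $k=2$, already contradicts a Catalan count ($C_2=2$) and confirms $2^2=4$; following your stated hunch would derail the computation.

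Two further points need attention before the argument closes. First, your displayed sum $\sum_{s=1}^n C_{n,s}N(s)$ omits the labeling factor: $C_{n,s}$ counts forest shapes, and the $2^n n!$ choices of signed labels $e_1,\dots,e_n$ with $\{|e_1|,\dots,|e_n|\}=\llbracket 1,n\rrbracket$ must be factored out explicitly, so that what remains to prove is $\sum_{s=1}^n \frac{s\,2^s}{2n-s}\binom{2n-s}{n}=\binom{2n}{n}$. Second, that identity is not an off-the-shelf Vandermonde collapse; the paper establishes it by induction using the recurrence $C_{n,s}=C_{n-1,s-1}+C_{n,s+1}$, and you would need to supply that step (or an equivalent generating-function or lattice-path argument) rather than defer it as routine.
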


\begin{proof}
The number of labeling of a rooted ordered forest of size $n$ with labels
  $e_1, \ldots, e_n$ such that
  $\{|e_1|, \ldots, |e_n|\}=\llbracket 1,n\rrbracket$ is $2^n n!$.
  Thus, from Corollary \ref{cor:regions2symforests}, we can compute the number of
  regions of the type $C$ Catalan arrangement,
  $$ r(C_{\left\{-1,0,1\right\}}(n)) = {2^n}n! \sum\limits_{s=1}^{n} C_{n,s} D_{n,s} $$
  where $C_{n,s}$ is given by Proposition \ref{prop:numla} and
  $D_{n,s}$ is the number of shuffles between any rooted labeled ordered forest
  of size $n$ with $s$ special leaves and its symmetric.

 \quad Now we compute $D_{n,s}$. By Propositions  \ref{prop:symbij} and \ref{prop:compatible}, this is equal to the number of elements of the set %shuffles between an annotated 1-sketch $\omega_1= \omega_0 \alpha_{e_n}^{(0)} \alpha_{e_{n-s+1}}^{(1)} \alpha_{e_{n-s+2}}^{(1)}... \alpha_{e_{n-1}}^{(1)} \alpha_{e_n}^{(1)}$ and its symmetric $\overline{\omega}_1= \alpha_{-e_n}^{(0)}...\alpha_{-e_{n-s+2}}^{(0)} \alpha_{-e_{n-s+1}}^{(0)} \alpha_{-e_n}^{(1)} \overline{\omega}_0$. By Definition 1.5, %
 $\omega_1\bowtie \overline{\omega}_1 = \omega_0 \alpha_{j_n}^{(0)} \left[\psi
\bowtie \overline{\psi} \right ] \alpha_{-j_n}^{(1)} 
\overline{\omega}_0 $, with $\omega_1 \in A_{n,2n-s}$, $\omega_1= \omega_0
 \alpha_{j_n}^{(0)} \alpha_{j_{n-s+1}}^{(1)}
 \alpha_{j_{n-s+2}}^{(1)}... \alpha_{j_{n-1}}^{(1)}
 \alpha_{j_n}^{(1)}$ and $\psi= \alpha_{j_{n-s+1}}^{(1)} \alpha_{j_{n-s+2}}^{(1)}... \alpha_{j_{n-1}}^{(1)} \alpha_{j_n}^{(1)}$. 
 On the other hand, every annotated 1-sketch of size $n$ can be
 represented by a Dyck path of the same size, so $D_{n,s}$ is
 obviously the number of shuffles between  $s$ down steps and $s$ up
 steps, here each element of the shuffles is of the form $a_1 a_2...a_s b_s...b_2b_1$ with $(a_i, b_i) \in \left\{ (U,D), (D,U) \right\}$, for all $1 \leq i \leq s$. Therefore, $D_{n,s} = 2^{s}$. 
Note that we have the recurrence formula $C_{n,s} = C_{n-1,s-1} + C_{n,s+1} $ for all $ 1 \leq s \leq n-1$. Then we get
$$ \sum\limits_{s=1}^{n} C_{n,s} D_{n,s}=  \sum\limits_{s=1}^{n} \frac{s2^s \binom{2n-s}{n}}{2n-s} =  \binom{2n}{n}$$
by induction.

\end{proof}

It would now be interesting to see if the bijection between our forests
and the regions of the Catalan arrangement of type $C$ can be refined
to the regions of the Linial arrangement of type $C$, thus giving a
bijective interpretation of the enumeration exhibited by
C.A. Athanasiadis in \cite{A96}.

\bibliographystyle{plain}
\bibliography{typeCArrangement}

\end{document}